\documentclass[a4paper,12pt]{article}
\usepackage{graphicx}
\usepackage{float}
\usepackage{latexsym}
\usepackage{amssymb}
\usepackage{amsmath}
\usepackage{amsthm}
\usepackage{cite}
\usepackage{bm}
\usepackage{epstopdf}

\newtheorem{theorem}{Theorem}[section]

\newtheorem{example}[theorem]{Example}
\newtheorem{proposition}[theorem]{Proposition}

\newtheorem{definition}[theorem]{Definition}

\newcommand{\R}{\mathbb{R}}

\begin{document}

\title{Helicoidal surfaces of non-lightlike frontals in Lorentz-Minkowski 3-space}
\author{Kaixin Yao\textsuperscript{1}, Wei Zhang\textsuperscript{2,3*}\\
{\small \it \textsuperscript{1}School of Science, Yanshan University, Qinhuangdao 066004, P. R. China}\\
{\small \it \textsuperscript{1}e-mail: yaokx@ysu.edu.cn}\\
{\small \it \textsuperscript{2}School of Mathematics and Statistics, Yili Normal University, Yining 835000, P. R. China}\\
{\small \it \textsuperscript{3}Institute of Applied Mathematics, Yili Normal University, Yining 835000, P. R. China}\\
{\small \it \textsuperscript{2,3}e-mail: zhangw257@nenu.edu.cn}
  }

\date{\today}

\maketitle
\begin{abstract}
In this paper, we define two types of helicoidal surfaces of non-lightlike frontals in Lorentz-Minkowski 3-space and investigate when they become lightcone framed base surfaces. Moreover, by constructing appropriate diffeomorphic transformations and using the criteria of $(i,j)$-cusps and $(i,j)$-cuspidal edges, we establish identification theorems for the singular types of both 1-type and 2-type helicoidal surfaces on their singular loci.
\end{abstract}

\renewcommand{\thefootnote}{\fnsymbol{footnote}}
\footnote[0]{2020 Mathematics Subject Classification: 53A35, 53A04, 53A05, 57R45.}
\footnote[0]{Keywords: helicoidal surface, frontal, lightcone framed surface, singularity.}
\footnote[0]{*Corresponding author.}

\section{Introduction}

Lorentz-Minkowski 3-space \(\mathbb{R}^3_1\) is a fundamental geometric model for describing flat spacetime in general relativity and mathematical physics, endowed with a pseudo inner product of signature \((-,+,+)\) (cf. \cite{Lo,ON}). The geometry and singularity analysis of curves and surfaces in Lorentz-Minkowski 3-space not only possess deep mathematical significance but also find important applications in physics, such as relativity, optics, and wavefront propagation (cf. \cite{Capozziello,HKP}). In particular, the distinction between timelike, spacelike and lightlike vectors corresponds to observers, spatial directions and lightcone structures in physics, making the theory of surfaces in this space both geometrically and physically meaningful.

In relativity, spacetime is regarded as a four-dimensional Lorentzian manifold, whose three-dimensional spatial sections can often be locally described by \(\mathbb{R}^3_1\). Helicoidal surfaces  (or constant angle surfaces), with their helical symmetry, can model the spacetime structure around rotating black holes, spiral wavefront propagation, or physical field distributions with angular momentum (cf. \cite{Gourgoulhon}). Moreover, the concepts of ``frontals'' and ``framed surfaces'' are particularly important in describing wavefront singularities, such as caustics (cf. \cite{FT02,FT01,HT01,LLW2025,PTZ,YLLP}). Therefore, studying helicoidal surfaces and their singularities in Lorentz-Minkowski space not only contributes to understanding the local structure of spacetime geometry but also provides mathematical models for simulating relativistic phenomena (cf. \cite{BMS}).

\begin{figure}[t]
	\begin{center}
		\includegraphics*[width=16cm]{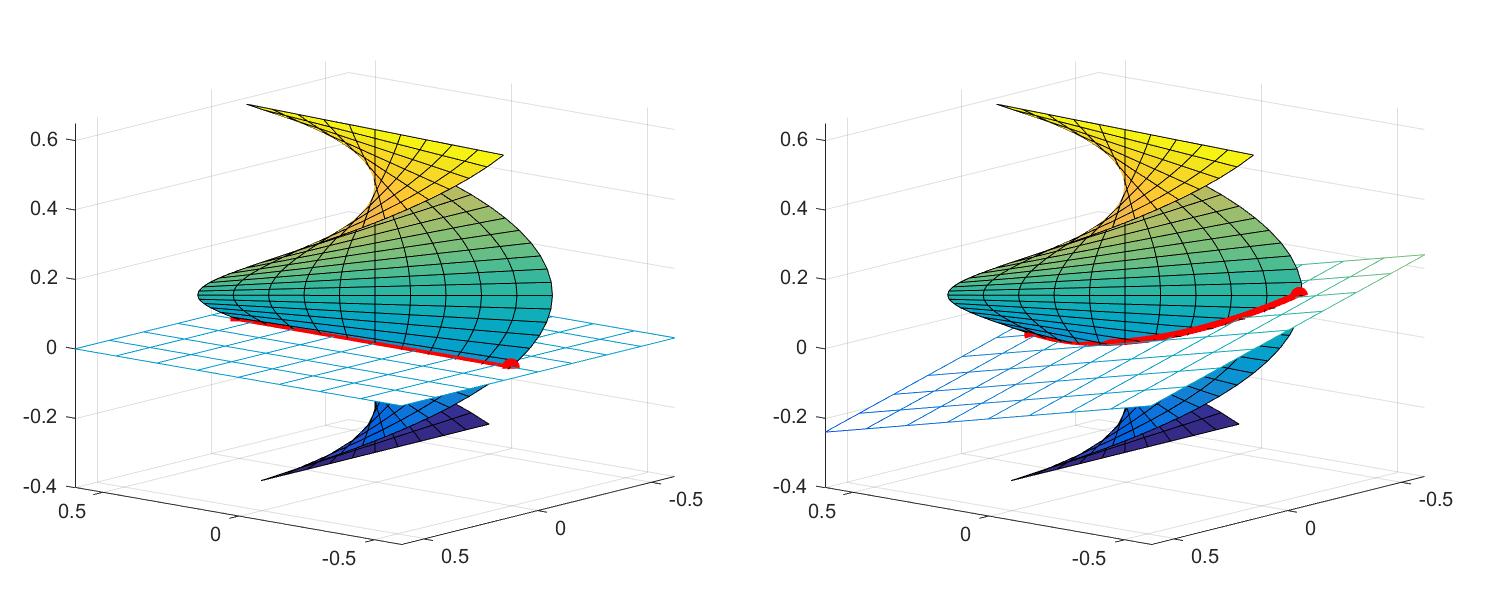}
	\end{center}
	\caption{Left: Intersection between a helicoidal string world sheet
		and
		a plane is a straight line. Right: Same situation, but now the intersection is
		curved (cf. \cite{BMS}).}
	\label{fig:helic}
\end{figure}

The study of helicoidal surfaces originates from the generalization of Bour's theorem in Euclidean geometry. In classical differential geometry, there are a lot of investigations of helicoidal surfaces not only as minimal surfaces but also as constant mean curvature surfaces (cf. \cite{COP2022,GAS01}), and it has attracted considerable attention in singular differential geometry in recent years (cf. \cite{HHM,MS2025,TT}). Moreover, within the framework of Riemannian geometry, recent scholars have studied helicoidal surfaces in different ambient spaces (cf. \cite{COP2024,LO-Y2017,LO-Y2024,MOPP2016,Onnis2017,OPP2019,Pellegrino1,Pellegrino2,WCL2020}). 

It is particularly worth noting the work of Nakatsuyama, Saji, Shimada and Takahashi (cf. \cite{TT}). In \cite{TT}, helicoidal surfaces generated not only by regular curves but also by curves with singular points, for which the use of frontals in the Euclidean plane is particularly useful. The helicoidal surface of a frontal can be naturally considered as a generalised framed base surface. The basic invariants,  curvatures, and the criteria for singularities of the helicoidal surfaces of frontals can be got by using the curvatures of Legendre curves.

The study of singularities of such surfaces in $\R_1^3$ introduces challenges and phenomena absent in the Euclidean case, due primarily to the indefinite metric and the resulting classification of points into spacelike, timelike, and lightlike types. Bridging singularity theory of frontals with geometry in Lorentz-Minkowski 3-space, we consider the $(x,z)$ Lorentz-Minkowski plane into $(x, y,z)$ Lorentz-Minkowski 3-space and give a curve in the $(x,z)$ Lorentz-Minkowski plane, so called the profile curve. We define the 1-type helicoidal surface along the $x$-direction and the 2-type helicoidal surface along the $z$-direction in Lorentz-Minkowski 3-space. On the one hand, if the profile curve has a singular point, then the helicoidal surface automatically has singular points. Even if the profile curve is regular, the helicoidal surface may have singular points. On the other hand, if the profile curve is mixed-type in the $(x,z)$ Lorentz-Minkowski plane, then the helicoidal surface is automatically mixed-type. Even if the profile curve is non-lightlike, the helicoidal surface may be also mixed-type. 

The remained part of this paper is organized as follows. In Section 3, we introduce two types of helicoidal surfaces constructed from non-lightlike frontals and investigate their fundamental differential geometric properties, including singularity conditions and criteria for being spacelike, timelike, or lightlike. Moreover, we show that when $\delta = 1$, these helicoidal surfaces become lightcone framed base surfaces and derive their basic invariants. Section 4 is devoted to the classification of singularities appearing on these helicoidal surfaces. By constructing appropriate diffeomorphisms, we reduce the singularity analysis to that of planar curves and establish necessary and sufficient conditions for the surfaces to admit $(i,j)$-cuspidal edges. Finally, in Section 5, we provide concrete examples to illustrate the theoretical results, accompanied by figures showing the surfaces and their singular loci.

All maps and manifolds considered here are differentiable of class $C^{\infty}$ unless stated otherwise.

\section{Priliminaries}
Let $\mathbb{R}^3$ be the Euclidean 3-space. For any vectors $\bm{x} = (x_1, x_2, x_3), \bm{y} = (y_1, y_2, y_3) \in \mathbb{R}^3,$ their pseudo inner product is defined by
$$\langle \bm{x}, \bm{y} \rangle = -x_1y_1 + x_2y_2 + x_3y_3.$$
$(\mathbb{R}^3, \langle,\rangle)$ is called Lorentz-Minkowski 3-space and denoted by $\mathbb{R}_1^3.$ The pseudo wedge product of $\bm{x} = (x_1, x_2, x_3)$ and $\bm{y} = (y_1, y_2, y_3)$ is
$$\bm{x} \wedge \bm{y} =
\left|
\begin{array}{ccc}
	-\bm{e}_1 & \bm{e}_2 & \bm{e}_3\\
	x_1 & x_2 & x_3\\
	y_1 & y_2 & y_3
\end{array}
\right|,$$
where $\{\bm{e}_1, \bm{e}_2, \bm{e}_3\}$ is the canonical basis of $\mathbb{R}_1^3.$

For any non-zero vector $\bm{x} \in \mathbb{R}_1^3 \setminus \{\bm{0}\},$ it is called spacelike, timelike or lightlike if $\langle \bm{x}, \bm{x} \rangle$ is positive, negative or zero. $\bm{0}$ is a spacelike vector. The set $LC^*$ including all lightlike vectors in $\mathbb{R}_1^3$ is called the ligntcone. $\mathbb{R}_1^2$ is a subspace of $\mathbb{R}_1^3$ with the signature $(-,+).$ Let $\Delta = S_1^1 \cup H^1,$ where $$S_1^1 = \{\bm{x} \in \mathbb{R}_1^2 | \langle \bm{x}, \bm{x} \rangle = 1\}$$ and
$$H^1 = \{\bm{x} \in \mathbb{R}_1^2 | \langle \bm{x}, \bm{x} \rangle = -1\}.$$

\begin{definition}{\rm (\cite{YL})}
	$I$ is an interval of $\mathbb{R}.$ $(\gamma, \bm\nu) : I \rightarrow \mathbb{R}_1^2 \times \Delta$ is called a non-lightlike Legendre curve if $\langle \gamma'(u), \bm\nu(u) \rangle = 0$ for all $u \in I.$ $\gamma : I \rightarrow \mathbb{R}_1^2$ is called a non-lightlike frontal if there exists a smooth map $\bm\nu : I \rightarrow \Delta$ such that $(\gamma, \bm\nu)$ is a non-lightlike Legendre curve. 
\end{definition}

Let $\bm\nu(u) = (a(u), b(u))$ and $\delta = a^2(u) - b^2(u).$ Define $\bm\mu(u) = (b(u), a(u)).$ Then $\bm\mu : I \rightarrow \Delta$ satisfies $\langle \bm\nu(u), \bm\mu(u) \rangle = 0.$ The Frenet-type formula of $(\gamma, \bm\nu)$ is
$$\begin{pmatrix}
	\bm\nu'(u)\\
	\bm\mu'(u)
\end{pmatrix}
=
\begin{pmatrix}
	0 & l(u)\\
	l(u) & 0
\end{pmatrix}
\begin{pmatrix}
	\bm\nu(u)\\
	\bm\mu(u)
\end{pmatrix},
\gamma'(u) = \beta(u)\bm\mu(u),$$
where $l(u) = \delta\langle \bm\nu'(u), \bm\mu(u) \rangle$ and $\beta(u) = \delta\langle \gamma'(t), \bm\mu(u) \rangle.$ The pair $(l,\beta)$ is called the curvature of $(\gamma, \bm\nu).$

\begin{definition}{\rm (\cite{ML})}
	{\rm $U$ is a domain in $\mathbb{R}^2.$ $(\bm{x}, \ell^+, \ell^-) : U \rightarrow \mathbb{R}_1^3 \times \Delta_4$ is called a lightcone framed surface if there exist smooth functions $\alpha, \beta : U \rightarrow \mathbb{R}$ such that $\bm{x}_u(u,v) \wedge \bm{x}_v(u,v) = \alpha(u,v)\ell^+(u,v) + \beta(u,v)\ell^-(u,v)$ for all $(u,v) \in U,$ where $\Delta_4 = \{(\bm{v}, \bm{w}) \in LC^* \times LC^* | \langle \bm{v}, \bm{w} \rangle = -2\}.$ We call $\bm{x} : U \rightarrow \mathbb{R}_1^3$ a lightcone framed base surface if there exists $(\ell^+, \ell^-) : U \rightarrow \Delta_4$ such that $(\bm{x}, \ell^+, \ell^-)$ is a lightcone framed surface.}
\end{definition}

Define $\bm{t}(u,v) = -\dfrac{1}{2}\ell^+(u,v) \wedge \ell^-(u,v)$. Then $\langle \bm{t}(u,v), \bm{t}(u,v) \rangle = 1.$ $\{\ell^+(u,v), \ell^-(u,v), \bm{t}(u,v) \}$ is a moving frame on $\bm{x}.$ We have the following formulas
$$\begin{pmatrix}
	\bm{x}_u(u,v) \\
	\bm{x}_v(u,v)
\end{pmatrix} =
\begin{pmatrix}
	a_1(u,v) & b_1(u,v) & c_1(u,v) \\
	a_2(u,v) & b_2(u,v) & c_2(u,v)
\end{pmatrix}
\begin{pmatrix}
	\ell^+(u,v) \\
	\ell^-(u,v) \\
	\bm{t}(u,v)
\end{pmatrix},$$
$$\begin{pmatrix}
	\ell^+_u(u,v) \\
	\ell^-_u(u,v) \\
	\bm{t}_u(u,v)
\end{pmatrix} =
\begin{pmatrix}
	e_1(u,v) & 0 & 2g_1(u,v) \\
	0 & -e_1(u,v) & 2f_1(u,v) \\
	f_1(u,v) & g_1(u,v) & 0
\end{pmatrix}
\begin{pmatrix}
	\ell^+(u,v) \\
	\ell^-(u,v) \\
	\bm{t}(u,v)
\end{pmatrix},$$
$$\begin{pmatrix}
	\ell^+_v(u,v) \\
	\ell^-_v(u,v) \\
	\bm{t}_v(u,v)
\end{pmatrix} =
\begin{pmatrix}
	e_2(u,v) & 0 & 2g_2(u,v) \\
	0 & -e_2(u,v) & 2f_2(u,v) \\
	f_2(u,v) & g_2(u,v) & 0
\end{pmatrix}
\begin{pmatrix}
	\ell^+(u,v) \\
	\ell^-(u,v) \\
	\bm{t}(u,v)
\end{pmatrix},$$
where
$$\begin{array}{ll}
	a_1(u,v) = -\frac{1}{2}\langle \bm{x}_u(u,v), \ell^-(u,v) \rangle, & a_2(u,v) = -\frac{1}{2}\langle \bm{x}_v(u,v), \ell^-(u,v) \rangle, \\
	b_1(u,v) = -\frac{1}{2}\langle \bm{x}_u(u,v), \ell^+(u,v) \rangle, & b_2(u,v) = -\frac{1}{2}\langle \bm{x}_v(u,v), \ell^+(u,v) \rangle, \\
	c_1(u,v) = \langle \bm{x}_u(u,v), \bm{t}(u,v) \rangle, & c_2(u,v) = \langle \bm{x}_v(u,v), \bm{t}(u,v) \rangle, \\
	e_1(u,v) = -\frac{1}{2}\langle \ell^+_u(u,v), \ell^-(u,v) \rangle, & e_2(u,v) = -\frac{1}{2}\langle \ell^+_v(u,v), \ell^-(u,v) \rangle, \\
	f_1(u,v) = -\frac{1}{2}\langle \bm{t}_u(u,v), \ell^-(u,v) \rangle, & f_2(u,v) = -\frac{1}{2}\langle \bm{t}_v(u,v), \ell^-(u,v) \rangle, \\
	g_1(u,v) = -\frac{1}{2}\langle \bm{t}_u(u,v), \ell^+(u,v) \rangle, & g_2(u,v) = -\frac{1}{2}\langle \bm{t}_v(u,v), \ell^+(u,v) \rangle.
\end{array}$$
The twelve functions are called basic invariants of the lightcone framed surface $(\bm{x}, \ell^+, \ell^-).$

\section{Helicoidal surfaces of non-lightlike frontals}
In this section, we define two types of helicoidal surfaces of non-lightlike frontals and discuss their differential geometric properties.

\begin{definition}
	Let $(\gamma, \bm\nu) : I \rightarrow \mathbb{R}_1^2 \times \Delta$ be a non-lightlike Legendre curve and $\gamma(u) = (x_1(u), x_2(u)).$ We call
	$$\bm{r}_1 : I \times \mathbb{R} \rightarrow \mathbb{R}_1^3, ~ \bm{r}_1(u,v) = (x_1(u) + \lambda v, x_2(u)\sin v, x_2(u)\cos v)$$
	and
	$$\bm{r}_2 : I \times \mathbb{R} \rightarrow \mathbb{R}_1^3, ~ \bm{r}_2(u,v) = (x_1(u)\cosh v, x_1(u)\sinh v, x_2(u) + \lambda v)$$
	a 1-type helicoidal surface and a 2-type helicoidal surface, respectively, where $\lambda$ is a non-zero constant.
\end{definition}

\subsection{1-type helicoidal surfaces}\label{sec}
By calculation, we have
$$\begin{aligned}
	\frac{\partial \bm{r}_1}{\partial u}(u,v) =& \beta(u)(b(u), a(u)\sin v, a(u)\cos v),\\
	\frac{\partial \bm{r}_1}{\partial v}(u,v) =& (\lambda, x_2(u)\cos v, -x_2(u)\sin v),\\
	\frac{\partial \bm{r}_1}{\partial u}(u,v) \wedge \frac{\partial \bm{r}_1}{\partial v}(u,v) =& \beta(u)\big(a(u)x_2(u), b(u)x_2(u)\sin v + \lambda a(u) \cos v,\\
	&~ b(u)x_2(u)\cos v - \lambda a(u)\sin v\big)
\end{aligned}$$
and
$$\left\langle \frac{\partial \bm{r}_1}{\partial u}(u,v) \wedge \frac{\partial \bm{r}_1}{\partial v}(u,v), \frac{\partial \bm{r}_1}{\partial u}(u,v) \wedge \frac{\partial \bm{r}_1}{\partial v}(u,v) \right\rangle = \beta^2(u)(\lambda^2 a^2(u) - \delta x_2^2(u)).$$
So we have following propositions.
\begin{proposition}
	The 1-type helicoidal surface $\bm{r}_1$ is singular at $(u_0, v_0) \in I \times \mathbb{R}$ if and only if $\beta(u_0) = 0$ or $a(u_0) = x_2(u_0) = 0.$
\end{proposition}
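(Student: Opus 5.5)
Singularity of $\bm r_1$ at $(u_0,v_0)$ means that $\partial\bm r_1/\partial u$ and $\partial\bm r_1/\partial v$ are linearly dependent there. This holds if and only if their pseudo wedge product vanishes, since the pseudo wedge product differs from the Euclidean cross product only by the sign of the first component. So the plan is to use the wedge product computed above and determine exactly when
$$\beta(u_0)\big(a x_2,\; b x_2\sin v_0+\lambda a\cos v_0,\; b x_2\cos v_0-\lambda a\sin v_0\big)(u_0)=\bm 0 .$$
We cannot use the squared norm $\beta^2(\lambda^2a^2-\delta x_2^2)$ for this: the metric is indefinite, so a nonzero vector can have zero norm. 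We therefore work componentwise.

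\emph{Sufficiency.} If $\beta(u_0)=0$, the whole vector vanishes. If $a(u_0)=x_2(u_0)=0$, every component carries a factor $a(u_0)$ or $x_2(u_0)$, so again the vector is zero.

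\emph{Necessity.} Suppose $\beta(u_0)\neq 0$; we must show $a(u_0)=x_2(u_0)=0$. The last two bracketed components are obtained from the vector $(b x_2,\lambda a)$ by the rotation matrix
$$\begin{pmatrix}\sin v_0&\cos v_0\\ \cos v_0&-\sin v_0\end{pmatrix},$$
whose determinant is $-1$, so it is invertible. Hence both components vanish if and only if $b(u_0)x_2(u_0)=0$ and $\lambda a(u_0)=0$. Since $\lambda\neq0$, this gives $a(u_0)=0$. Then $\bm\nu(u_0)\in\Delta$ forces $-b(u_0)^2=\delta=\pm1$, so $b(u_0)\neq0$, and $b x_2=0$ yields $x_2(u_0)=0$. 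The first component $a x_2$ is then automatically zero.

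The only point needing care is the step $b(u_0)\neq0$ when $a(u_0)=0$, which uses the constraint that $\bm\nu$ takes values in $\Delta$. The point $(a,b)=(0,0)$ does not lie in $\Delta$, so this is immediate. The argument is elementary throughout, and no real obstacle is expected.
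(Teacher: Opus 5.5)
Your proof is correct and follows the same route as the paper. The paper computes $\frac{\partial \bm r_1}{\partial u}\wedge\frac{\partial \bm r_1}{\partial v}=\beta(u)\big(a x_2,\, b x_2\sin v+\lambda a\cos v,\, b x_2\cos v-\lambda a\sin v\big)$ and reads off where this vanishes; your componentwise argument (the invertible $2\times 2$ orthogonal matrix, $\lambda\neq 0$, and $b(u_0)\neq 0$ from $\bm\nu(u_0)\in\Delta$) supplies the details it leaves implicit, and you are right not to rely on the indefinite squared norm, which the paper uses only for the spacelike/timelike/lightlike classification.
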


\begin{proposition}
	The 1-type helicoidal surface $\bm{r}_1$ is spacelike, timelike or lightlike at $(u_0, v_0) \in I \times \mathbb{R}$ if and only if $\beta(u_0) \neq 0, ~ (a(u_0), x_2(u_0)) \neq (0,0)$ and $\lambda^2a^2(u_0) - \delta x_2^2(u_0)$ is negative, positive or zero.
\end{proposition}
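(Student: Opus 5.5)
The statement follows from the computation displayed just before it, together with the previous proposition. The causal character of a regular surface in $\R_1^3$ at a point is determined by its normal vector $N=\frac{\partial \bm{r}_1}{\partial u}\wedge\frac{\partial \bm{r}_1}{\partial v}$. The surface is spacelike, timelike or lightlike exactly when $N$ is timelike, spacelike or lightlike, respectively; equivalently, when $\langle N,N\rangle$ is negative, positive or zero. So the plan is: first restrict to regular points, then read off the sign of $\langle N,N\rangle$ from the formula already obtained.

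\textbf{Step 1: regularity.} The notions spacelike, timelike and lightlike are only meaningful where $\bm{r}_1$ is an immersion. By the previous proposition, $\bm{r}_1$ is regular at $(u_0,v_0)$ if and only if $\beta(u_0)\neq 0$ and $(a(u_0),x_2(u_0))\neq(0,0)$. To justify that proposition independently, I would note that the first component of $N$ is $\beta a x_2$. The last two components form $\beta$ times the rotation by $v$ of the vector $(b x_2,\lambda a)$. Hence $N=0$ if and only if $\beta=0$ or $a x_2=b x_2=\lambda a=0$. Since $\lambda\neq 0$ and $(a,b)\in\Delta$ is nonzero, the latter case reduces to $a=x_2=0$.

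\textbf{Step 2: sign of the induced metric.} At a regular point, the induced metric on the tangent plane $\mathrm{span}\{\bm{r}_{1u},\bm{r}_{1v}\}$ is positive definite, indefinite or degenerate according as the normal $N$ is timelike, spacelike or lightlike. One way to see this is the Lorentzian Lagrange identity $\langle \bm{x}\wedge\bm{y},\bm{x}\wedge\bm{y}\rangle=-(EG-F^2)$, which can be checked directly from the definitions in Section 2. The plane is spacelike iff $EG-F^2>0$, timelike iff $EG-F^2<0$, and lightlike iff $EG-F^2=0$.

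\textbf{Step 3: conclusion.} Substitute the displayed formula $\langle N,N\rangle=\beta^2(u)\bigl(\lambda^2a^2(u)-\delta x_2^2(u)\bigr)$. Since $\beta(u_0)\neq0$, the factor $\beta^2(u_0)$ is positive. Therefore the sign of $\langle N,N\rangle$ equals the sign of $\lambda^2a^2(u_0)-\delta x_2^2(u_0)$, and negative, positive and zero correspond to spacelike, timelike and lightlike, as claimed.

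The only point needing care is Step 2, namely the sign convention of the Lagrange identity for the paper's pseudo wedge product. I would verify it on the basis vectors: $\bm{e}_2\wedge\bm{e}_3=-\bm{e}_1$ is timelike, while the plane $\mathrm{span}\{\bm{e}_2,\bm{e}_3\}$ is spacelike. This confirms that the identity holds with the minus sign, so that a negative $\langle N,N\rangle$ corresponds to a spacelike surface.
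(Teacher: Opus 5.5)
Your proposal is correct and follows the paper's argument. The paper likewise obtains regularity from the vanishing of $\frac{\partial \bm{r}_1}{\partial u}\wedge\frac{\partial \bm{r}_1}{\partial v}$ and reads off the causal type from the sign of $\beta^2(\lambda^2a^2-\delta x_2^2)$. Your Steps 1--2 simply make explicit the regularity reduction and the convention $\langle \bm{x}\wedge\bm{y},\bm{x}\wedge\bm{y}\rangle=-(EG-F^2)$, which the paper uses without comment.
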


Next, we discuss the sufficient condition that 1-type helicoidal surfaces become lightcone framed base surfaces.

\begin{proposition}
	When $\delta = 1,$ the 1-type helicoidal surface $\bm{r}_1$ is a lightcone framed base surface.
\end{proposition}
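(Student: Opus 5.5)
The plan is to write down an explicit lightcone frame $(\ell^+,\ell^-)$ along $\bm{r}_1$, using the orthonormal frame that the profile Legendre curve $(\gamma,\bm\nu)$ induces on the surface. The condition $\delta=1$ is used to make this frame have the correct causal characters.

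\textbf{Step 1: an orthonormal frame.} Define
$$N(u,v)=(a(u),\,b(u)\sin v,\,b(u)\cos v),\quad M(u,v)=(b(u),\,a(u)\sin v,\,a(u)\cos v),\quad W(v)=(0,\cos v,-\sin v).$$
Direct computation gives
$$\langle N,N\rangle=-(a^2-b^2)=-\delta,\qquad \langle M,M\rangle=\delta,\qquad \langle W,W\rangle=1,$$
and all mutual pseudo inner products vanish (for instance $\langle N,M\rangle=-ab+ab=0$). Hence, when $\delta=1$, $\{N,M,W\}$ is a smooth pseudo-orthonormal frame on $I\times\mathbb{R}$ with $N$ timelike and $M,W$ spacelike.

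\textbf{Step 2: decompose the normal vector.} Regrouping the formula for $\frac{\partial \bm{r}_1}{\partial u}\wedge\frac{\partial \bm{r}_1}{\partial v}$ computed above gives
$$\frac{\partial \bm{r}_1}{\partial u}\wedge\frac{\partial \bm{r}_1}{\partial v}=\beta(u)x_2(u)\,N(u,v)+\lambda\beta(u)a(u)\,W(v).$$
So this vector always lies in the timelike plane spanned by $N$ and $W$.

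\textbf{Step 3: define the lightcone frame.} Set
$$\ell^+=N+W,\qquad \ell^-=N-W.$$
Then $\langle \ell^\pm,\ell^\pm\rangle=-1+1=0$, and $\langle\ell^+,\ell^-\rangle=-1-1=-2$. Both vectors are nonzero, so $(\ell^+,\ell^-)$ is a smooth map into $\Delta_4$. Substituting $N=\frac12(\ell^++\ell^-)$ and $W=\frac12(\ell^+-\ell^-)$ into Step 2 yields
$$\frac{\partial \bm{r}_1}{\partial u}\wedge\frac{\partial \bm{r}_1}{\partial v}=\tfrac12\beta(x_2+\lambda a)\,\ell^++\tfrac12\beta(x_2-\lambda a)\,\ell^-.$$
The coefficients are smooth functions on $I\times\mathbb{R}$, so $(\bm{r}_1,\ell^+,\ell^-)$ is a lightcone framed surface and $\bm{r}_1$ is a lightcone framed base surface.

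As a consistency check, $\bm t=-\frac12\ell^+\wedge\ell^-$ should equal $\pm M$, since $M$ is the unit spacelike vector orthogonal to both $\ell^\pm$. I would verify the sign by direct computation for later use with the basic invariants, although the sign is not needed for the proposition itself.

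The only real obstacle is locating a decomposition of the normal vector into two orthogonal vectors of opposite causal type. Once $N$ and $W$ are identified, everything is routine. This is also the point where $\delta=1$ is essential: if $\delta=-1$, then $N$ would be spacelike, $\mathrm{span}\{N,W\}$ would be a spacelike plane, and no lightlike vectors could be found in it.
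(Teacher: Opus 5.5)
Your proof is correct and matches the paper's: your $\ell^\pm=N\pm W$ are exactly the paper's $\ell_1^\pm=(a,\,b\sin v\pm\cos v,\,b\cos v\mp\sin v)$, and your coefficients $\tfrac12\beta(x_2\pm\lambda a)$ are the same as in the paper's decomposition. Your $N,W$ splitting also makes explicit the check that $(\ell^+,\ell^-)$ lands in $\Delta_4$, which the paper leaves implicit. The sign you left open is $+$, since $-\tfrac12\ell^+\wedge\ell^-=N\wedge W=M$, which agrees with the paper's $\bm t_1$.
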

\begin{proof}
	For the 1-type helicoidal surface $\bm{r}_1(u,v) = (x_1(u) + \lambda v, x_2(u)\sin v, x_2(u)\cos v),$ let
	$$\begin{aligned}
		\ell_1^+(u,v) =& (a(u), b(u)\sin v + \cos v, b(u)\cos v - \sin v),\\
		\ell_1^-(u,v) =& (a(u), b(u)\sin v - \cos v, b(u)\cos v + \sin v).
	\end{aligned}$$
	Then $$\frac{\partial \bm{r}_1}{\partial u}(u,v) \wedge \frac{\partial \bm{r}_1}{\partial v}(u,v) = \frac{\beta(u)(x_2(u) + \lambda a(u))}{2} \ell_1^+(u,v) + \frac{\beta(u)(x_2(u) - \lambda a(u))}{2} \ell_1^-(u,v).$$
	The 1-type helicoidal surface $\bm{r}_1$ is a lightcone framed base surface.
\end{proof}

Let $$\bm{t}_1(u,v) = -\frac{1}{2}\ell_1^+(u,v) \wedge \ell_1^-(u,v) = (b(u), a(u)\sin v, a(u)\cos v).$$
Then we have the following formulas
$$\begin{pmatrix}
	\dfrac{\partial \bm{r}_1}{\partial u}(u,v)
	\vspace{1.5ex}\\
	\dfrac{\partial \bm{r}_1}{\partial v}(u,v)
\end{pmatrix} =
\begin{pmatrix}
	0 & 0 & \beta(u) \\
	\dfrac{\lambda a(u) + x_2(u)}{2} & \dfrac{\lambda a(u) - x_2(u)}{2} & -\lambda b(u)
\end{pmatrix}
\begin{pmatrix}
	\ell_1^+(u,v)\\
	\ell_1^-(u,v)\\
	\bm{t}_1(u,v)
\end{pmatrix},$$
$$\begin{pmatrix}
	\dfrac{\partial \ell_1^+}{\partial u}(u,v)
	\vspace{1.5ex}\\
	\dfrac{\partial \ell_1^-}{\partial u}(u,v)
	\vspace{1.5ex}\\
	\dfrac{\partial \bm{t}_1}{\partial u}(u,v)
\end{pmatrix} =
\begin{pmatrix}
	0 & 0 & l(u) \\
	0 & 0 & l(u) \\
	\dfrac{l(u)}{2} & \dfrac{l(u)}{2} & 0
\end{pmatrix}
\begin{pmatrix}
	\ell_1^+(u,v)\\
	\ell_1^-(u,v)\\
	\bm{t}_1(u,v)
\end{pmatrix},$$
$$\begin{pmatrix}
	\dfrac{\partial \ell_1^+}{\partial v}(u,v)
	\vspace{1.5ex}\\
	\dfrac{\partial \ell_1^-}{\partial v}(u,v)
	\vspace{1.5ex}\\
	\dfrac{\partial \bm{t}_1}{\partial v}(u,v)
\end{pmatrix} =
\begin{pmatrix}
	b(u) & 0 & -a(u) \\
	0 & -b(u) & a(u) \\
	\dfrac{a(u)}{2} & -\dfrac{a(u)}{2} & 0
\end{pmatrix}
\begin{pmatrix}
	\ell_1^+(u,v)\\
	\ell_1^-(u,v)\\
	\bm{t}_1(u,v)
\end{pmatrix}.$$

\subsection{2-type helicoidal surfaces}
Similar to the Section \ref{sec}, we claim the following conclusions about 2-type helicoidal surfaces without proof.

\begin{proposition}
	The 2-type helicoidal surface $\bm{r}_2$ is singular at $(u_0, v_0) \in I \times \mathbb{R}$ if and only if $\beta(u_0) = 0$ or $b(u_0) = x_1(u_0) = 0.$
\end{proposition}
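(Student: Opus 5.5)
We mirror the computation of Section \ref{sec} and read off when the pseudo wedge product of the partial derivatives vanishes. By the Frenet-type formula, $\gamma'(u)=\beta(u)\bm\mu(u)=\beta(u)(b(u),a(u))$, so $x_1'=\beta b$ and $x_2'=\beta a$. Differentiating $\bm{r}_2(u,v)=(x_1(u)\cosh v, x_1(u)\sinh v, x_2(u)+\lambda v)$ gives
$$\frac{\partial \bm{r}_2}{\partial u}(u,v)=\beta(u)\big(b(u)\cosh v,\ b(u)\sinh v,\ a(u)\big),\qquad \frac{\partial \bm{r}_2}{\partial v}(u,v)=\big(x_1(u)\sinh v,\ x_1(u)\cosh v,\ \lambda\big).$$

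Next we compute the pseudo wedge product from the determinant definition. Using $\cosh^2v-\sinh^2v=1$, we expect
$$\frac{\partial \bm{r}_2}{\partial u}\wedge\frac{\partial \bm{r}_2}{\partial v}=\beta(u)\Big(-\big(\lambda b\sinh v-a x_1\cosh v\big),\ -\big(\lambda b\cosh v - a x_1\sinh v\big),\ b x_1\Big).$$
Its pseudo norm simplifies, by $\cosh^2 v-\sinh^2 v=1$ and $a^2-b^2=\delta$, to
$$\beta^2(u)\big(\lambda^2 b^2(u) - \delta x_1^2(u)\big)$$
(possibly up to an overall sign). The norm is not needed for singularity itself; it is the analogue of the quantity used in the causal-character proposition.

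The surface is singular at $(u_0,v_0)$ exactly when the two partials are linearly dependent, i.e.\ when the wedge product vanishes, since the pseudo wedge product is the Euclidean cross product composed with the linear isomorphism $\mathrm{diag}(-1,1,1)$. If $\beta(u_0)=0$, the wedge vanishes. If $b(u_0)=x_1(u_0)=0$, every component vanishes. Conversely, suppose $\beta(u_0)\neq0$ and the wedge vanishes. The third component gives $b x_1=0$. If $b=0$, then $a\neq0$ because $a^2-b^2=\delta=\pm1$, and the first two components reduce to $a x_1\cosh v_0=0$, forcing $x_1=0$. If instead $x_1=0$, the components reduce to $\lambda b\sinh v_0=\lambda b\cosh v_0=0$, and since $\lambda\neq0$ and $\cosh v_0\neq0$, we get $b=0$.

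The only real care needed is the sign bookkeeping in the Lorentzian wedge product. The case analysis itself is immediate, relying on $\lambda\neq0$, $\cosh v\neq0$, and $(a,b)\neq(0,0)$.
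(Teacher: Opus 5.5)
Your proof is correct and takes the same route the paper intends by analogy with the 1-type case: compute $\frac{\partial \bm{r}_2}{\partial u}\wedge\frac{\partial \bm{r}_2}{\partial v}=\beta\,(a x_1\cosh v-\lambda b\sinh v,\ a x_1\sinh v-\lambda b\cosh v,\ b x_1)$, which matches the paper's decomposition in terms of $\ell_2^{\pm}$, and read off its zero set; your case analysis supplies the details the paper leaves unstated. The sign hedge is unnecessary, since the pseudo norm is exactly $\beta^2(\lambda^2 b^2-\delta x_1^2)$.
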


\begin{proposition}
	The 2-type helicoidal surface $\bm{r}_2$ is spacelike, timelike or lightlike at $(u_0, v_0) \in I \times \mathbb{R}$ if and only if $\beta(u_0) \neq 0, ~ (b(u_0), x_1(u_0)) \neq (0,0)$ and $\lambda^2b^2(u_0) - \delta x_1^2(u_0)$ is negative, positive or zero.
\end{proposition}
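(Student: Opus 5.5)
We verify the proposition for $\bm{r}_2$ by the same direct computation used for $\bm{r}_1$: compute the pseudo wedge product of the partial derivatives, then read off its causal character. Recall $\gamma'(u)=\beta(u)\bm\mu(u)$ with $\bm\mu=(b,a)$, so $x_1'=\beta b$ and $x_2'=\beta a$. Differentiating $\bm{r}_2(u,v)=(x_1\cosh v, x_1\sinh v, x_2+\lambda v)$ gives
$$\frac{\partial \bm{r}_2}{\partial u}=\beta(u)\big(b(u)\cosh v,\ b(u)\sinh v,\ a(u)\big),\qquad \frac{\partial \bm{r}_2}{\partial v}=\big(x_1(u)\sinh v,\ x_1(u)\cosh v,\ \lambda\big).$$

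Next we expand the determinant defining $\wedge$, with the $-\bm e_1$ in the first row. Using $\cosh^2v-\sinh^2v=1$ for the third component, we expect
$$\frac{\partial \bm{r}_2}{\partial u}\wedge\frac{\partial \bm{r}_2}{\partial v}=\beta(u)\big(-(\lambda b\sinh v - a x_1\cosh v),\ a x_1\sinh v-\lambda b\cosh v,\ b x_1\big).$$
Writing $A=\lambda b$ and $B=a x_1$, the first two components are, up to sign, $A\sinh v - B\cosh v$ and $B\sinh v - A\cosh v$. Their Lorentzian contribution to the squared norm is
$$-(A\sinh v-B\cosh v)^2+(B\sinh v-A\cosh v)^2=A^2-B^2,$$
again because $\cosh^2v-\sinh^2v=1$. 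Adding the square of the third component gives
$$\Big\langle \tfrac{\partial \bm{r}_2}{\partial u}\wedge\tfrac{\partial \bm{r}_2}{\partial v},\tfrac{\partial \bm{r}_2}{\partial u}\wedge\tfrac{\partial \bm{r}_2}{\partial v}\Big\rangle=\beta^2\big(\lambda^2b^2-a^2x_1^2+b^2x_1^2\big)=\beta^2\big(\lambda^2 b^2-\delta x_1^2\big),$$
using $\delta=a^2-b^2$.

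The conclusion then follows exactly as for $\bm{r}_1$. By the preceding proposition on singular points of $\bm{r}_2$, the surface is regular at $(u_0,v_0)$ precisely when $\beta(u_0)\neq0$ and $(b(u_0),x_1(u_0))\neq(0,0)$; only at such points is the causal character defined. At a regular point the wedge product is a normal vector, and the surface is spacelike, timelike or lightlike exactly when this normal is timelike, spacelike or lightlike. Since $\beta^2(u_0)>0$, the sign of the squared norm is the sign of $\lambda^2b^2(u_0)-\delta x_1^2(u_0)$, which gives the three cases: negative, positive, zero.

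The only real obstacle is sign bookkeeping in the Lorentzian cross product, that is, the $-\bm e_1$ convention and the cancellation through $\cosh^2v-\sinh^2v=1$. To guard against errors there, I would double-check the result by confirming that the computed wedge product is orthogonal to both partial derivatives.
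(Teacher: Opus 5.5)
Your proposal is correct and follows essentially the paper's route. The paper states this result without proof, by analogy with the 1-type case. That argument is the one you give: compute $\partial_u\bm{r}_2\wedge\partial_v\bm{r}_2$, get the pseudo-norm $\beta^2(\lambda^2b^2-\delta x_1^2)$, and combine it with the singular-point criterion. Your wedge product also agrees with the paper's decomposition $\tfrac{\beta(x_1-\lambda b)}{2}\ell_2^+ + \tfrac{\beta(x_1+\lambda b)}{2}\ell_2^-$.
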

\begin{proposition}
	When $\delta = 1,$ the 2-type helicoidal surface $\bm{r}_2$ is a lightcone framed base surface.
\end{proposition}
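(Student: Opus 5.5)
The plan is to mimic the proof of the corresponding proposition for $\bm{r}_1$. We write down an explicit pair of lightlike vector fields $(\ell_2^+,\ell_2^-)$ along $\bm{r}_2$ and check that $\frac{\partial \bm{r}_2}{\partial u}\wedge\frac{\partial \bm{r}_2}{\partial v}$ is a smooth linear combination of them. The frame is built from two pieces. The first is the timelike lift of $\bm\nu$ under the boost in $v$, namely $N(u,v)=(a(u)\cosh v, a(u)\sinh v, b(u))$. The second is the spacelike unit vector $S(v)=(\sinh v,\cosh v,0)$.

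First I compute the partial derivatives, using $\gamma'=\beta\bm\mu$, i.e. $x_1'=\beta b$ and $x_2'=\beta a$:
$$\frac{\partial \bm{r}_2}{\partial u}=\beta(u)\big(b(u)\cosh v,\ b(u)\sinh v,\ a(u)\big),\qquad \frac{\partial \bm{r}_2}{\partial v}=\big(x_1(u)\sinh v,\ x_1(u)\cosh v,\ \lambda\big).$$
Expanding the pseudo wedge product (remembering the $-\bm{e}_1$ in the first row), I expect
$$\frac{\partial \bm{r}_2}{\partial u}\wedge\frac{\partial \bm{r}_2}{\partial v}=\beta(u)\big(x_1(u)N(u,v)-\lambda b(u)S(v)\big).$$
The main place to be careful is this sign bookkeeping. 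A quick check is the third component, which should be $\beta b x_1(\cosh^2 v-\sinh^2 v)=\beta b x_1$.

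Next I verify the causal properties. We have $\langle N,N\rangle=-a^2+b^2=-\delta=-1$ (this is where $\delta=1$ is used), $\langle S,S\rangle=1$, and $\langle N,S\rangle=-a\cosh v\sinh v+a\sinh v\cosh v=0$. Setting
$$\ell_2^\pm(u,v)=N(u,v)\pm S(v)=\big(a(u)\cosh v\pm\sinh v,\ a(u)\sinh v\pm\cosh v,\ b(u)\big),$$
both vectors are lightlike, since $\langle N,N\rangle+\langle S,S\rangle=0$. Moreover $\langle\ell_2^+,\ell_2^-\rangle=\langle N,N\rangle-\langle S,S\rangle=-2$, so $(\ell_2^+,\ell_2^-)$ takes values in $\Delta_4$ and is smooth.

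Finally, solving $\alpha+\tilde\beta=\beta x_1$ and $\alpha-\tilde\beta=-\lambda\beta b$ gives
$$\frac{\partial \bm{r}_2}{\partial u}\wedge\frac{\partial \bm{r}_2}{\partial v}=\frac{\beta(u)(x_1(u)-\lambda b(u))}{2}\,\ell_2^+(u,v)+\frac{\beta(u)(x_1(u)+\lambda b(u))}{2}\,\ell_2^-(u,v).$$
The coefficients are smooth, so $(\bm{r}_2,\ell_2^+,\ell_2^-)$ is a lightcone framed surface and $\bm{r}_2$ is a lightcone framed base surface. For later use one can also record $\bm{t}_2=-\frac12\ell_2^+\wedge\ell_2^-$. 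Up to sign this should be the spacelike vector $\big(b(u)\cosh v, b(u)\sinh v, a(u)\big)$ orthogonal to $N$ and $S$, which is parallel to $\frac{\partial \bm{r}_2}{\partial u}$.
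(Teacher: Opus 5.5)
Your proof is correct and follows the same route as the paper. The paper states this proposition without a written proof, but it records exactly your frame $\ell_2^\pm=(a\cosh v\pm\sinh v,\ a\sinh v\pm\cosh v,\ b)$ and the same decomposition with coefficients $\beta(x_1\mp\lambda b)/2$. Your explicit check that $(\ell_2^+,\ell_2^-)$ takes values in $\Delta_4$ (with $\delta=1$ used in $\langle N,N\rangle=-1$) fills in what the paper leaves implicit. Your ``up to sign'' hedge on $\bm{t}_2$ is unnecessary: by antisymmetry $-\frac12\ell_2^+\wedge\ell_2^-=N\wedge S=(b\cosh v,\ b\sinh v,\ a)$ exactly.
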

Let
$$\begin{aligned}
	\ell_2^+(u,v) =& (a(u)\cosh v + \sinh v, a(u)\sinh v + \cosh v, b(u)),\\
	\ell_2^-(u,v) =& (a(u)\cosh v - \sinh v, a(u)\sinh v - \cosh v, b(u))
\end{aligned}$$
and
$$\bm{t}_2(u,v) = -\frac{1}{2}\ell_2^+(u,v) \wedge \ell_2^-(u,v) = (b(u)\cosh v, b(u)\sinh v, a(u)).$$
We have the following formulas
$$\frac{\partial \bm{r}_2}{\partial u}(u,v) \wedge \frac{\partial \bm{r}_2}{\partial v}(u,v) = \frac{\beta(u)(x_1(u) - \lambda b(u))}{2} \ell_2^+(u,v) + \frac{\beta(u)(x_1(u) + \lambda b(u))}{2} \ell_2^-(u,v),$$
$$\begin{pmatrix}
	\dfrac{\partial \bm{r}_2}{\partial u}(u,v)
	\vspace{1.5ex}\\
	\dfrac{\partial \bm{r}_2}{\partial v}(u,v)
\end{pmatrix} =
\begin{pmatrix}
	0 & 0 & \beta(u) \\
	\dfrac{-\lambda b(u) + x_1(u)}{2} & \dfrac{-\lambda b(u) - x_1(u)}{2} & \lambda a(u)
\end{pmatrix}
\begin{pmatrix}
	\ell_2^+(u,v)\\
	\ell_2^-(u,v)\\
	\bm{t}_2(u,v)
\end{pmatrix},$$
$$\begin{pmatrix}
	\dfrac{\partial \ell_2^+}{\partial u}(u,v)
	\vspace{1.5ex}\\
	\dfrac{\partial \ell_2^-}{\partial u}(u,v)
	\vspace{1.5ex}\\
	\dfrac{\partial \bm{t}_2}{\partial u}(u,v)
\end{pmatrix} =
\begin{pmatrix}
	0 & 0 & l(u) \\
	0 & 0 & l(u) \\
	\dfrac{l(u)}{2} & \dfrac{l(u)}{2} & 0
\end{pmatrix}
\begin{pmatrix}
	\ell_2^+(u,v)\\
	\ell_2^-(u,v)\\
	\bm{t}_2(u,v)
\end{pmatrix},$$
$$\begin{pmatrix}
	\dfrac{\partial \ell_2^+}{\partial v}(u,v)
	\vspace{1.5ex}\\
	\dfrac{\partial \ell_2^-}{\partial v}(u,v)
	\vspace{1.5ex}\\
	\dfrac{\partial \bm{t}_2}{\partial v}(u,v)
\end{pmatrix} =
\begin{pmatrix}
	a(u) & 0 & -b(u) \\
	0 & -a(u) & b(u) \\
	\dfrac{b(u)}{2} & -\dfrac{b(u)}{2} & 0
\end{pmatrix}
\begin{pmatrix}
	\ell_2^+(u,v)\\
	\ell_2^-(u,v)\\
	\bm{t}_2(u,v)
\end{pmatrix}.$$

\section{Singularities of helicoidal surfaces}
In this section, we discuss the singularity identification problem of helicoidal surfaces.
\begin{definition}{\rm (\cite{NN})}
	\begin{enumerate}
		\item [{\rm (1)}] Let $\gamma : (I, 0) \rightarrow (\mathbb{R}^2, 0)$ be a curve germ. We say that $\gamma$ is an $(i,j)$-cusp at $0$ if $\gamma$ is $\mathcal A$-equivalent to the germ $u \mapsto (u^i, u^j)$ at the origin, where $(i,j) = (2,3), (2,5), (3,4), (3,5).$
		\item [{\rm (2)}] Let $f : (\mathbb{R}^2, 0) \rightarrow (\mathbb{R}_1^3, 0)$ be a map germ. We say that $f$ is an $(i,j)$-cuspidal edge at $0$ if $f$ is $\mathcal A$-equivalent to the germ $(u,v) \mapsto (u, v^i, v^j)$ at the origin, where $(i,j) = (2,3), (2,5), (3,4), (3,5).$
	\end{enumerate}	
\end{definition}

For $(i,j)$-cusp on a curve $\gamma,$ the following criteria are known (cf. \cite{IRP}).
\begin{proposition}
	Let $\gamma : I \rightarrow \mathbb{R}^2$ be a smooth curve with a singularity $u_0 \in I.$
	\begin{enumerate}
		\item [{\rm (1)}] $\gamma$ has a $(2,3)$-cusp at $u_0$ if and only if $\det(\gamma''(u_0), \gamma'''(u_0)) \neq 0.$
		\item [{\rm (2)}] $\gamma$ has a $(2,5)$-cusp at $u_0$ if and only if $\gamma''(u_0) \neq \bm{0},$ $\gamma'''(u_0) = k \gamma''(u_0)$ for some constant $k$ and $\det(\gamma''(u_0), 3\gamma^{(5)}(u_0) - 10k \gamma^{(4)}(u_0)) \neq 0.$
		\item [{\rm (3)}] $\gamma$ has a $(3,4)$-cusp at $u_0$ if and only if $\gamma''(u_0) = \bm{0}$ and $\det(\gamma'''(u_0), \gamma^{(4)}(u_0)) \neq 0.$
		\item [{\rm (4)}] $\gamma$ has a $(3,5)$-cusp at $u_0$ if and only if $\gamma''(u_0) = \bm{0},~ \det(\gamma'''(u_0), \gamma^{(4)}(u_0)) = 0$ and $\det(\gamma'''(u_0), \gamma^{(5)}(u_0)) \neq 0.$
	\end{enumerate}
\end{proposition}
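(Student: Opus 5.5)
Since $u_0$ is a singular point, $\gamma'(u_0)=\bm{0}$. My plan is to reduce each of the four criteria to a normal form by Taylor expansion at $u_0$, followed by explicit changes of coordinates in source and target, i.e.\ by the standard $\mathcal A$-equivalence argument. Translating, I take $u_0=0$ and $\gamma(0)=\bm 0$, so
\[
\gamma(u)=\tfrac12\gamma''(0)u^2+\tfrac16\gamma'''(0)u^3+\tfrac1{24}\gamma^{(4)}(0)u^4+\tfrac1{120}\gamma^{(5)}(0)u^5+O(u^6).
\]
The cases then split according to the 2-jet: either $\gamma''(0)\neq\bm 0$ (cases (1) and (2)) or $\gamma''(0)=\bm 0$ (cases (3) and (4)).

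\emph{Case $\gamma''(0)\neq \bm 0$.} I apply a linear change of target coordinates sending $\gamma''(0)$ to $(2,0)$, so that $\gamma=(x(u),y(u))$ with $x=u^2+O(u^3)$ and $y=O(u^3)$. Since $x$ has a nondegenerate critical point, a source diffeomorphism $u\mapsto \tilde u$ with $x=\tilde u^2$ exists (Morse lemma in one variable).
\begin{itemize}
\item Write $y=g(\tilde u^2)+\tilde u\,h(\tilde u^2)$, using the even/odd decomposition. The target map $(X,Y)\mapsto (X,Y-g(X))$ kills the even part, leaving $(\tilde u^2,\tilde u h(\tilde u^2))$.
\item If $h(0)\neq0$, this is $\mathcal A$-equivalent to $(u^2,u^3)$. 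The condition $h(0)\neq0$ is equivalent to $\det(\gamma''(0),\gamma'''(0))\neq 0$, which gives (1).
\item If $h(0)=0$ and $h'(0)\neq0$, I obtain $(u^2,u^5)$ after rescaling.
\end{itemize}
The main computational step is to express $h'(0)$ in the original derivatives. The condition $h(0)=0$ means $\gamma'''(0)=k\gamma''(0)$. Tracking how the reparametrization $\tilde u=u+cu^2+\dots$ (with $c$ determined by $k$) feeds the $u^4$ and $u^5$ coefficients into the odd part, I expect $h'(0)$ to be a nonzero multiple of $\det(\gamma''(0),3\gamma^{(5)}(0)-10k\gamma^{(4)}(0))$. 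This gives (2), and I regard it as the main obstacle: it requires a careful fifth-order Taylor bookkeeping. I would carry it out by first reparametrizing so that $x=\tilde u^2$ exactly, then reading off the $\tilde u^5$ coefficient of $y$.

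\emph{Case $\gamma''(0)=\bm 0$ and $\gamma'''(0)\neq\bm 0$.} I normalize $\gamma'''(0)$ to $(6,0)$ and reparametrize so that $x=\tilde u^3$. I then remove from $y$ all terms in $\tilde u^{3m}$ by target changes $(X,Y)\mapsto(X,Y-\phi(X))$, and also terms of the form $\tilde u^{3m+1}$ times functions of $x$ of sufficiently high order, in the standard way.
\begin{itemize}
\item The leading remaining term is $\tilde u^4$ when $\det(\gamma''',\gamma^{(4)})\neq0$. Using the finite determinacy of $(u^3,u^4)$, this gives (3).
\item If that determinant vanishes, the $\tilde u^4$ coefficient is zero. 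The $\tilde u^5$ coefficient is then proportional to $\det(\gamma'''(0),\gamma^{(5)}(0))$; no correction term appears here, because the reparametrization cross-terms only involve $\gamma^{(4)}$, which is parallel to $\gamma'''$. Determinacy of $(u^3,u^5)$ gives (4).
\end{itemize}

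\emph{Necessity.} For the converse directions in each case I use that the conditions are $\mathcal A$-invariant. The vanishing of $\gamma''$ corresponds to multiplicity $2$ versus $3$, and the determinant conditions are exactly what distinguishes the normal forms listed in the definition of $(i,j)$-cusps, since each model germ satisfies its own condition and violates those of the others. Finally, since this criterion is quoted from \cite{IRP}, in the paper itself it suffices to cite it; the outline above is how I would verify it independently.
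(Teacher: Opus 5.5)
The paper gives no proof of this proposition; it quotes the four criteria from the literature. Your argument is therefore an independent, self-contained route. You normalize the $2$-jet and reparametrize so that $x=\tilde u^2$ or $x=\tilde u^3$. You then remove the part of $y$ that is a function of $x$, read off the first surviving coefficient, and invoke finite determinacy of $(u^3,u^4)$ and $(u^3,u^5)$. For the sufficiency directions this works, and it explains where the combination $3\gamma^{(5)}-10k\gamma^{(4)}$ comes from. The coefficient you only ``expect'' is correct. With $\gamma''(0)=(2,0)$ and $q_i=y^{(i)}(0)$, one has $u=\tilde u-\tfrac k6\tilde u^2+O(\tilde u^3)$. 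The $\tilde u^5$-coefficient of $y$ is then $\tfrac{q_5}{120}-\tfrac{kq_4}{36}=\tfrac1{720}\det\bigl(\gamma''(0),3\gamma^{(5)}(0)-10k\gamma^{(4)}(0)\bigr)$.

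Two repairs are needed.

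\emph{(i) An indexing slip.} Since $y=O(\tilde u^3)$, your decomposition $y=g(\tilde u^2)+\tilde u\,h(\tilde u^2)$ forces $h(0)=0$; if $h(0)\neq 0$ the germ would be an immersion. So your case ``$h(0)\neq 0$'' is vacuous, and its claimed equivalence with $\det(\gamma'',\gamma''')\neq 0$ is false as written. Write the odd part as $\tilde u^3h(\tilde u^2)$ instead. Then $h(0)\neq0$ gives the $(2,3)$-cusp. If $h(0)=0\neq h'(0)$, you get $(\tilde u^2,\tilde u^5\psi(\tilde u^2))$ with $\psi(0)\neq 0$. This becomes $(u^2,u^5)$ via $(X,Y)\mapsto (X,Y/\psi(X))$, not merely a rescaling.

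\emph{(ii) Necessity rests on an unchecked claim.} Your converse directions rely entirely on the $\mathcal A$-invariance of the four conditions, which you assert but do not verify. This cannot be obtained from sufficiency: that is exactly what necessity amounts to, so it must be checked directly.

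For (1), (3), (4) the check is a short chain-rule computation, because $\gamma'(0)=\bm 0$ (resp.\ $\gamma'(0)=\gamma''(0)=\bm 0$) kills almost every correction term. In (4), the only source correction not parallel to $\gamma'''$ is $10\sigma'^3\sigma''\gamma^{(4)}$. Its contribution vanishes because $\det(\gamma''',\gamma^{(4)})=0$.

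For (2) the check is genuine.
\begin{itemize}
\item Under a source change $u=\sigma(t)$, one gets $\tilde k=\sigma' k+3\sigma''/\sigma'$. Modulo $\gamma''$, $\tilde\gamma^{(4)}\equiv\sigma'^4\gamma^{(4)}$ and $\tilde\gamma^{(5)}\equiv \sigma'^5\gamma^{(5)}+10\sigma'^3\sigma''\gamma^{(4)}$. Hence $\det(\tilde\gamma'',3\tilde\gamma^{(5)}-10\tilde k\tilde\gamma^{(4)})=\sigma'^7\det(\gamma'',3\gamma^{(5)}-10k\gamma^{(4)})$.
\item Under a target diffeomorphism $\Phi$, $\tilde k=k$. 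One has $\tilde\gamma^{(4)}=D\Phi\,\gamma^{(4)}+3D^2\Phi(\gamma'',\gamma'')$ and $\tilde\gamma^{(5)}=D\Phi\,\gamma^{(5)}+10k\,D^2\Phi(\gamma'',\gamma'')$. The quadratic terms cancel exactly in $3\tilde\gamma^{(5)}-10k\tilde\gamma^{(4)}$.
\end{itemize}

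With these verifications included, your remark that each model germ satisfies its own condition does give necessity. Without them, you would instead need an invariant separating $(s^2,s^5)$ from germs whose odd part has order at least $7$.
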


\subsection{Singularities of 1-type helicoidal surfaces}
Define the following two maps 
$\varphi_1 : I \times \mathbb{R} \rightarrow I \times \mathbb{R}, ~ (u,v) \mapsto (\bar u, \bar v)$ and $\psi_1 : \mathbb{R}_1^3 \rightarrow \mathbb{R}_1^3, ~ (x,y,z) \mapsto (\bar x, \bar y, \bar z)$ by 
$$\varphi_1(u,v) = (u, x_1(u) + \lambda v)$$
and
$$\psi_1(x,y,z) = \left(x, y \sin\frac{x}{\lambda} + z \cos\frac{x}{\lambda}, -y \cos\frac{x}{\lambda} + z \sin\frac{x}{\lambda}\right),$$
respectively. Their Jacobian matrices are 
$$J_{\varphi_1}(u,v) =
\begin{pmatrix}
	1 & 0 \\
	l(u)b(u) & \lambda
\end{pmatrix}$$
and
$$J_{\psi_1}(x,y,z) =
\begin{pmatrix}
	1 & 0 & 0
	\vspace{1.5ex}\\
	\dfrac{\partial \bar y}{\partial x} & \sin\dfrac{x}{\lambda} & \cos\dfrac{x}{\lambda}
	\vspace{1.5ex}\\
	\dfrac{\partial \bar z}{\partial x} & -\cos\dfrac{x}{\lambda} & \sin\dfrac{x}{\lambda}
\end{pmatrix}.$$
So $\varphi_1$ and $\psi_1$ are both invertible, where the inverse map of $\varphi_1$ is $\varphi_1^{-1}(\bar u, \bar v) = \left(\bar u, \dfrac{\bar v - x_1(\bar u)}{\lambda}\right).$

For the 1-type helicoidal surface $\bm{r}_1(u,v) = (x_1(u) + \lambda v, x_2(u)\sin v, x_2(u)\cos v),$ we define a surface
$$\begin{aligned}
	&\psi_1 \circ \bm{r}_1 \circ \varphi_1^{-1}(\bar u, \bar v)\\
	=& \psi_1 \left(\bar v, x_2(\bar u) \sin \frac{\bar v - x_1(\bar u)}{\lambda}, x_2(\bar u) \cos \frac{\bar v - x_1(\bar u)}{\lambda}\right)\\
	=& \left(\bar v, x_2(\bar u) \cos \frac{x_1(\bar u)}{\lambda}, x_2(\bar u) \sin \frac{x_1(\bar u)}{\lambda}\right)
\end{aligned}$$
and a curve
$$\gamma_1(\bar u) = \left(x_2(\bar u) \cos \frac{x_1(\bar u)}{\lambda}, x_2(\bar u) \sin \frac{x_1(\bar u)}{\lambda}\right).$$
Then the 1-type helicoidal surface $\bm{r}_1$ is an $(i,j)$-cuspidal edge at $(u_0, v_0)$ if and only if the surface $\psi_1 \circ \bm{r}_1 \circ \varphi_1^{-1}$ is an $(i,j)$-cuspidal edge at $(u_0, x_1(u_0) + \lambda v_0).$ This is equivalent to the curve $\gamma_1$ being an $(i,j)$-cusp at $\bar u = u_0.$ Note that $\gamma$ and $\gamma_1$ are diffeomorphic except at $x_2(u_0) = 0.$ We only consider $\gamma_1$ at $x_2(u_0) = 0.$ Since the diffeomorphism $\varphi_1$ maps $u$ to $\bar u = u,$ we write $\gamma_1(\bar u)$ as $\gamma_1(u).$

\begin{theorem}\label{th}
	Let $\bm{r}_1$ be the 1-type helicoidal surface and $(u_0, v_0)$ be a singular point of $\bm{r}_1.$ Assume that $x_2(u_0) = 0.$ We have the followings.
	\begin{enumerate}
		\item [{\rm (1)}] If $\beta(u_0) = 0, ~ a(u_0) = 0,$ then for any $v \in \mathbb{R},$ the surface $\bm{r}_1$ is a $(3,5)$-cuspidal edge at $(u_0, v)$ if and only if $\beta'(u_0)l(u_0) \neq 0.$ In this case, $\bm{r}_1$ does not have $(2,3)$-cuspidal edges, $(2,5)$-cuspidal edges or $(3,4)$-cuspidal edges.
		\item [{\rm (2)}] If $\beta(u_0) = 0, ~ a(u_0) \neq 0,$ then for any $v \in \mathbb{R},$ the surface $\bm{r}_1$ is a $(2,5)$-cuspidal edge at $(u_0, v)$ if and only if $\beta'(u_0)l(u_0) \neq 0.$ In this case, $\bm{r}_1$ does not have $(2,3)$-cuspidal edges, $(3,4)$-cuspidal edges or $(3,5)$-cuspidal edges.
		\item [{\rm (3)}] If $\beta(u_0) \neq 0, ~ a(u_0) = 0,$ then for any $v \in \mathbb{R},$ the surface $\bm{r}_1$ is a $(2,3)$-cuspidal edge at $(u_0, v)$ if and only if $l(u_0) \neq 0,$ $\bm{r}_1$ is a $(3,4)$-cuspidal edge at $(u_0, v)$ if and only if $l(u_0) = 0$ and $l'(u_0) \neq 0.$ In this case, $\bm{r}_1$ does not have $(2,5)$-cuspidal edges or $(3,5)$-cuspidal edges.
	\end{enumerate}
\end{theorem}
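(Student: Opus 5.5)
By the reduction preceding the statement, $\bm{r}_1$ is an $(i,j)$-cuspidal edge at $(u_0,v)$ if and only if the plane curve $\gamma_1(u)=\bigl(x_2(u)\cos\theta(u),\,x_2(u)\sin\theta(u)\bigr)$, with $\theta=x_1/\lambda$, is an $(i,j)$-cusp at $u_0$. This condition does not involve $v$, which explains the phrase ``for any $v\in\mathbb{R}$''. So the whole proof amounts to evaluating the criteria of the $(i,j)$-cusp proposition (from \cite{IRP}) for $\gamma_1$ at $u_0$.

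\textbf{Reduction to derivatives of $G_k$.} I identify $\mathbb{R}^2$ with $\mathbb{C}$ and write $\gamma_1=x_2e^{i\theta}$. Then
$$\gamma_1^{(k)}=G_ke^{i\theta},\qquad G_1=x_2'+ix_2\theta',\qquad G_{k+1}=G_k'+i\theta'G_k .$$
Multiplication by $e^{i\theta(u_0)}$ is a rotation, so every determinant and every proportionality condition in the criteria may be checked on the $G_k(u_0)$ instead. From $\gamma'=\beta\bm\mu$ and the Frenet formula I get
$$x_1'=\beta b,\qquad x_2'=\beta a,\qquad a'=lb,\qquad b'=la,\qquad \theta'=\beta b/\lambda .$$
Hence $G_1=\beta\bigl(a+ix_2b/\lambda\bigr)$. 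I then differentiate repeatedly, using $x_2(u_0)=0$ and $a^2-b^2=\delta$ (so $b^2=1$ whenever $a=0$, since then $\delta=-1$).

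\textbf{Case (3): $\beta\neq0$, $a=0$.} Here $G_1(u_0)=0$ and $G_2(u_0)=\beta lb$, which is real. For $G_3(u_0)$:
\begin{itemize}
\item the real part is $2\beta'lb+\beta l'b$;
\item the imaginary part collects $\beta x_2''b/\lambda=\beta^2lb^2/\lambda$ from the second derivative of $x_2\theta'$, plus $2\theta'G_2'$-type contributions, giving $3\beta^2l/\lambda$.
\end{itemize}
So $\det(G_2,G_3)=3\beta^3l^2b/\lambda$, which is nonzero exactly when $l(u_0)\neq0$; this gives the $(2,3)$ statement.

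If $l(u_0)=0$, then $G_2(u_0)=0$ and $G_3(u_0)=\beta l'b$ is real. I will compute $\operatorname{Im}G_4(u_0)$, expecting it to be a nonzero multiple of $\beta^2l'/\lambda$. Then $\det(G_3,G_4)\neq0$ if and only if $l'(u_0)\neq0$, giving $(3,4)$.

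The $(2,5)$ type is excluded as follows. When $l\neq0$ the curve is already a $(2,3)$-cusp. When $l=0$ we have $G_2=0$, so the $(2,5)$ criterion (which requires $\gamma_1''\neq\bm 0$) fails.

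The $(3,5)$ type is excluded as follows. It requires $G_2=0$ and $\det(G_3,G_4)=0$, which forces $l'=0$. But then $G_3=0$, and $\det(G_3,G_5)=0$ as well.

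\textbf{Case (2): $\beta=0$, $a\neq0$.} Now $G_1(u_0)=0$. Using $x_2=x_2'=0$ at $u_0$, I get $G_2(u_0)=\beta'a$, which is real and nonzero when $\beta'\neq0$. I will show $\operatorname{Im}G_3(u_0)=0$, so that $G_3=kG_2$ and the $(2,3)$ determinant vanishes. This rules out $(2,3)$; $(3,4)$ and $(3,5)$ are ruled out by $G_2\neq0$.

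Next I compute $\operatorname{Im}(3G_5-10kG_4)(u_0)$. The imaginary parts only start once $x_2\theta'$ has two factors of $\beta$ differentiated. I expect the result to reduce to a nonzero constant times $\beta'^3a\,l\,b/\lambda$, or a similar monomial. Combined with $a\neq0$, this shows the $(2,5)$ determinant is nonzero if and only if $\beta'l\neq0$. The case $\beta'=0$ needs separate care: there $G_2=0$, so $(2,5)$ cannot occur either, consistent with the criterion $\beta'l\neq0$.

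\textbf{Case (1): $\beta=0$, $a=0$.} Here $G_1(u_0)=G_2(u_0)=0$, which already excludes the $(2,3)$ and $(2,5)$ types. I expect
$$G_3(u_0)=2\beta' lb,$$
which is real. I will check that $\operatorname{Im}G_4(u_0)=0$, which kills $(3,4)$, and that $\operatorname{Im}G_5(u_0)$ is a nonzero multiple of $\beta'^2lb/\lambda$. Then $\det(G_3,G_5)\neq0$ if and only if $\beta'l\neq0$, giving $(3,5)$.

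\textbf{Main obstacle.} The hard part is the bookkeeping of $G_4$ and $G_5$ at $u_0$ in cases (1) and (2). The real parts depend on higher derivatives of $\beta$ and $l$, while the imaginary parts depend on the vanishing orders of $x_2$ and of $\theta'$ together. I would organise the computation by first recording the Taylor orders at $u_0$:
\begin{itemize}
\item $x_2=O((u-u_0)^2)$ in case (2);
\item $x_2=O((u-u_0)^3)$ in case (1);
\item $\theta'=O(u-u_0)$ whenever $\beta(u_0)=0$.
\end{itemize}
With these orders, I would then expand $\gamma_1$ directly as a power series in $u-u_0$. This avoids many terms of the recursion and makes it clear which single monomial survives in each determinant.
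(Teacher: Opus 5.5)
Your set-up is the paper's own: your $G_k$ is exactly $A_k-iB_k$ in its notation. Your predictions in cases (1) and (3) are also correct; they reproduce $\det(\gamma_1''',\gamma_1^{(5)})=40b^3\beta'^3l^2/\lambda$, $\det(\gamma_1'',\gamma_1''')=3b^3\beta^3l^2/\lambda$ and, when $l(u_0)=0$, $\det(\gamma_1''',\gamma_1^{(4)})=4b^3\beta^3l'^2/\lambda$.

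The genuine gap is case (2). There the decisive $(2,5)$ quantity is never computed, and the form you anticipate, a multiple of $\beta'^3alb/\lambda$, is wrong in a way that breaks the conclusion. Run your own Taylor plan with $s=u-u_0$. Then $x_2=p_2s^2+p_3s^3+\cdots$ and $\theta-\theta(u_0)=f_2s^2+f_3s^3+\cdots$, where $p_2=a\beta'/2$, $p_3=a\beta''/6+lb\beta'/3$, $f_2=b\beta'/(2\lambda)$ and $f_3=(b\beta''/6+la\beta'/3)/\lambda$. Hence $e^{-i\theta(u_0)}\gamma_1$ has real part $x_2+O(s^6)$ and imaginary part $p_2f_2s^4+(p_2f_3+p_3f_2)s^5+O(s^6)$. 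For $\beta'(u_0)\neq0$ one has $k=3p_3/p_2$, and
\[
\det\bigl(\gamma_1'',\,3\gamma_1^{(5)}-10k\gamma_1^{(4)}\bigr)(u_0)=720\,p_2\,(p_2f_3-p_3f_2)=\frac{60\,a(a^2-b^2)\beta'^3l}{\lambda}=\frac{60\,\delta a\beta'^3l}{\lambda}.
\]
The $\beta''$-terms cancel and the $l$-terms combine into $l(a^2-b^2)$. It is precisely the normalization $a^2-b^2=\delta$ that removes $b$. With your predicted factor $b$, the step ``combined with $a\neq0$'' would not give the stated equivalence: the determinant would vanish whenever $b(u_0)=0$, which can happen when $\delta=1$. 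The paper's Example 1 is such a point: $a(0)=1$, $b(0)=0$, and a $(2,5)$-cuspidal edge does occur. So the heart of case (2) still has to be carried out, and it works only because of this cancellation.

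There is a second, smaller omission in case (2). You exclude $(3,4)$ and $(3,5)$ using $G_2(u_0)\neq0$, but $G_2(u_0)=a\beta'(u_0)$ vanishes when $\beta'(u_0)=0$, and that subcase is left uncovered. Your own order bookkeeping closes it. If $\beta(u_0)=\beta'(u_0)=0$, then $x_2$ and $\theta-\theta(u_0)$ are both $O(s^3)$, so the imaginary part of $e^{-i\theta(u_0)}\gamma_1$ is $O(s^6)$. Thus $\gamma_1''',\gamma_1^{(4)},\gamma_1^{(5)}$ are all parallel at $u_0$, and both determinants in the $(3,4)$ and $(3,5)$ criteria vanish. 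The paper gets the same conclusion from the factor $\beta'$ in its formulas for $\det(\gamma_1''',\gamma_1^{(4)})$ and $\det(\gamma_1''',\gamma_1^{(5)})$. Once these two points are fixed, your Taylor organization is a real simplification over the paper's full $A_i,B_i$ formulas, since only a few coefficients enter the determinants.
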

\begin{proof}
	For the curve $\gamma_1(u) = \left(x_2(u) \cos \dfrac{x_1(u)}{\lambda}, x_2(u) \sin \dfrac{x_1(u)}{\lambda}\right),$ we have
	$$\gamma_1^{(i)}(u) = \left(A_i(u)\cos\frac{x_1(u)}{\lambda} + B_i(u)\sin\frac{x_1(u)}{\lambda}, -B_i(u)\cos\frac{x_1(u)}{\lambda} + A_i(u)\sin\frac{x_1(u)}{\lambda}\right)$$
	and
	$$\begin{aligned}
		&\det(\gamma_1^{(i)}(u), \gamma_1^{(j)}(u))\\
		=&\det
		\begin{pmatrix}
			A_i(u)\cos\dfrac{x_1(u)}{\lambda} + B_i(u)\sin\dfrac{x_1(u)}{\lambda} &  -B_i(u)\cos\dfrac{x_1(u)}{\lambda} + A_i(u)\sin\dfrac{x_1(u)}{\lambda}
			\vspace{1.5ex}\\
			A_j(u)\cos\dfrac{x_1(u)}{\lambda} + B_j(u)\sin\dfrac{x_1(u)}{\lambda} &  -B_j(u)\cos\dfrac{x_1(u)}{\lambda} + A_j(u)\sin\dfrac{x_1(u)}{\lambda}
		\end{pmatrix}\\
		=&-\det
		\begin{pmatrix}
			A_i(u) & B_i(u) \\
			A_j(u) & B_j(u)
		\end{pmatrix},
	\end{aligned}$$
	where $i,j = 1, 2, ..., 5$ and
	$$\begin{aligned}
		A_1(u) =& a \beta,\\
		B_1(u) =& -\frac{b \beta x_2}{\lambda},\\
		A_2(u) =& a \beta' + b \beta l -\frac{b^2 \beta^2 x_2}{\lambda^2},\\
		B_2(u) =& - \frac{b \beta' x_2}{\lambda} -\frac{2a b \beta^2}{\lambda} -\frac{a \beta l x_2}{\lambda},\\
		A_3(u) =& a \beta'' + 2b \beta' l + b \beta l' + a \beta l^2 - \frac{3a b^2 \beta^3}{\lambda^2} - \frac{3b^2 \beta' \beta x_2}{\lambda^2}	- \frac{3a b \beta^2 l x_2}{\lambda^2},\\
		B_3(u) =& - \frac{3a^2 \beta^2 l}{\lambda} - \frac{3b^2 \beta^2 l}{\lambda} + \frac{b^3 \beta^3 x_2}{\lambda^3} - \frac{b \beta'' x_2}{\lambda}	- \frac{6a b \beta' \beta}{\lambda} - \frac{2a \beta' l x_2}{\lambda} - \frac{a \beta l' x_2}{\lambda} - \frac{b \beta l^2 x_2}{\lambda},\\
		A_4(u) =& a \beta''' + 3b \beta' l' + 3b \beta'' l + b \beta l'' + 3a \beta' l^2 + b \beta l^3 - \frac{3b^2 \beta'^2 x_2}{\lambda^2} + \frac{b^4 \beta^4 x_2}{\lambda^4} + 3a \beta l l' - \frac{6b^3 \beta^3 l}{\lambda^2}\\
		&- \frac{3a^2 \beta^2 l^2 x_2}{\lambda^2} - \frac{4b^2 \beta^2 l^2 x_2}{\lambda^2} - \frac{4b^2 \beta'' \beta x_2}{\lambda^2} - \frac{18a b^2 \beta' \beta^2}{\lambda^2}
		- \frac{12a^2 b \beta^3 l}{\lambda^2} - \frac{4a b \beta^2 l' x_2}{\lambda^2}\\
		&- \frac{14a b \beta' \beta l x_2}{\lambda^2},\\
		B_4(u) =& - \frac{4a^2 \beta^2 l'}{\lambda} - \frac{4b^2 \beta^2 l'}{\lambda} - \frac{b \beta''' x_2}{\lambda} - \frac{6a b \beta'^2}{\lambda} + \frac{4a b^3 \beta^4}{\lambda^3} - \frac{8a b \beta'' \beta}{\lambda} - \frac{3a \beta' l' x_2}{\lambda} - \frac{3a \beta'' l x_2}{\lambda}\\
		&- \frac{a \beta l'' x_2}{\lambda} - \frac{14a^2 \beta' \beta l}{\lambda} - \frac{14b^2 \beta' \beta l}{\lambda} - \frac{a \beta l^3 x_2}{\lambda} - \frac{3b \beta' l^2 x_2}{\lambda} - \frac{14a b \beta^2 l^2}{\lambda} + \frac{6b^3 \beta' \beta^2 x_2}{\lambda^3}\\
		&- \frac{3b \beta l l' x_2}{\lambda} + \frac{6a b^2 \beta^3 l x_2}{\lambda^3},\\
		A_5(u) =& a \beta^{(4)} + 4b\beta' l'' + 6b \beta'' l' + 4b \beta''' l + b \beta l''' + 6a \beta'' l^2 + 3a \beta l'^2 + a \beta l^4 + 4b \beta' l^3 + 12a \beta' l l'\\
		&+ 4a \beta l l'' - \frac{15a^3 \beta^3 l^2}{\lambda^2} + 6b \beta l^2 l' + \frac{5a b^4 \beta^5}{\lambda^4} - \frac{10b^3 \beta^3 l'}{\lambda^2} - \frac{60a b^2 \beta^3 l^2}{\lambda^2} - \frac{10b^2 \beta' \beta'' x_2}{\lambda^2}\\
		&- \frac{5b^2 \beta''' \beta x_2}{\lambda^2} - \frac{45a b^2 \beta'^2 \beta}{\lambda^2} - \frac{30a b^2 \beta'' \beta^2}{\lambda^2} - \frac{20a^2 b \beta^3 l'}{\lambda^2} - \frac{50b^3 \beta' \beta^2 l} {\lambda^2} + \frac{10b^4 \beta' \beta^3 x_2}{\lambda^4}\\
		&- \frac{20a b \beta'^2 l x_2}{\lambda^2} - \frac{5a b \beta^2 l'' x_2}{\lambda^2} - \frac{100a^2 b \beta' \beta^2 l}{\lambda^2} - \frac{15a b \beta^2 l^3 x_2}{\lambda^2} + \frac{10a b^3 \beta^4 l x_2}{\lambda^4} - \frac{20a^2 \beta' \beta l^2 x_2}{\lambda^2}\\
		&- \frac{25b^2 \beta' \beta l^2 x_2}{\lambda^2} - \frac{10a^2 \beta^2 l l' x_2}{\lambda^2} - \frac{15b^2 \beta^2 l l' x_2}{\lambda^2} - \frac{25a b \beta' \beta l' x_2}{\lambda^2} - \frac{25a b\beta''\beta l x_2}{\lambda^2},\\
		B_5(u) =& - \frac{20a^2 \beta'^2 l}{\lambda} - \frac{5a^2 \beta^2 l''}{\lambda} - \frac{20b^2 \beta'^2 l}{\lambda} - \frac{5b^2 \beta^2 l''}{\lambda} + \frac{10b^4 \beta^4 l}{\lambda^3} - \frac{b^5 \beta^5 x_2}{\lambda^5} - \frac{b \beta^{(4)} x_2}{\lambda} - \frac{15a^2 \beta^2 l^3}{\lambda}\\
		&- \frac{15b^2 \beta^2 l^3}{\lambda} - \frac{20a b \beta' \beta''}{\lambda} - \frac{10a b \beta''' \beta}{\lambda} - \frac{4a  \beta' l'' x_2}{\lambda} - \frac{6a \beta'' l' x_2}{\lambda} - \frac{4a \beta''' l x_2}{\lambda} - \frac{a \beta l''' x_2}{\lambda}\\
		&+ \frac{30a^2 b^2 \beta^4 l} {\lambda^3} + \frac{10b^3\beta^3l^2x_2}{\lambda^3} - \frac{25a^2 \beta' \beta l'}{\lambda} - \frac{25a^2 \beta'' \beta l} {\lambda} - \frac{25b^2 \beta' \beta l'}{\lambda} - \frac{25b^2 \beta'' \beta l}{\lambda} - \frac{4a \beta' l^3 x_2}{\lambda}\\
		&- \frac{6b \beta'' l^2 x_2}{\lambda} - \frac{3b \beta l'^2 x_2}{\lambda} - \frac{b \beta l^4 x_2}{\lambda} + \frac{40a b^3 \beta' \beta^3}{\lambda^3} + \frac{15b^3 \beta'^2 \beta x_2}{\lambda^3} + \frac{10b^3\beta''\beta^2x_2}{\lambda^3} - \frac{12b \beta' l l' x_2}{\lambda}\\
		&- \frac{4b \beta l l'' x_2}{\lambda} + \frac{15a^2 b \beta^3 l^2 x_2}{\lambda^3} - \frac{90a b \beta' \beta l^2}{\lambda} - \frac{50a b \beta^2 l l'}{\lambda} - \frac{6a \beta l^2 l' x_2}{\lambda} + \frac{10a b^2 \beta^3 l' x_2}{\lambda^3}\\
		&+ \frac{50a b^2 \beta' \beta^2 l x_2}{\lambda^3}.
	\end{aligned}$$
	If $x_2(u_0) = 0$ and $\beta(u_0)a(u_0) = 0,$ the above equations become 
	$$\begin{aligned}
		A_1(u_0) =& 0,\\
		B_1(u_0) =& 0,\\
		A_2(u_0) =& a \beta' + b \beta l ,\\
		B_2(u_0) =& 0,\\
		A_3(u_0) =& a \beta'' + 2b \beta' l + b \beta l',\\
		B_3(u_0) =& - \frac{3b^2 \beta^2 l}{\lambda},\\
		A_4(u_0) =& a \beta''' + 3b \beta' l' + 3b \beta'' l + b \beta l'' + 3a \beta' l^2 + b \beta l^3 - \frac{6b^3 \beta^3 l}{\lambda^2},\\
		B_4(u_0) =& - \frac{4b^2 \beta^2 l'}{\lambda} - \frac{6a b \beta'^2}{\lambda} - \frac{14b^2 \beta' \beta l}{\lambda},\\
		A_5(u_0) =& a \beta^{(4)} + 4b\beta' l'' + 6b \beta'' l' + 4b \beta''' l + b \beta l''' + 6a \beta'' l^2 + 4b \beta' l^3 + 12a \beta' l l' + 6b \beta l^2 l'\\
		&- \frac{10b^3 \beta^3 l'}{\lambda^2} - \frac{50b^3 \beta' \beta^2 l} {\lambda^2},\\
		B_5(u_0) =& - \frac{20a^2 \beta'^2 l}{\lambda} - \frac{20b^2 \beta'^2 l}{\lambda} - \frac{5b^2 \beta^2 l''}{\lambda} + \frac{10b^4 \beta^4 l}{\lambda^3} - \frac{15b^2 \beta^2 l^3}{\lambda} - \frac{20a b \beta' \beta''}{\lambda} - \frac{25b^2 \beta' \beta l'}{\lambda}\\
		&- \frac{25b^2 \beta'' \beta l}{\lambda}.
	\end{aligned}$$
	
	(1) When $\beta(u_0) = 0, ~ a(u_0) = 0,$ we have
	$$\begin{aligned}
		A_1(u_0) =& 0,\\
		B_1(u_0) =& 0,\\
		A_2(u_0) =& 0 ,\\
		B_2(u_0) =& 0,\\
		A_3(u_0) =& 2b \beta' l,\\
		B_3(u_0) =& 0,\\
		A_4(u_0) =& 3b \beta' l' + 3b \beta'' l,\\
		B_4(u_0) =& 0,\\
		A_5(u_0) =& 4b\beta' l'' + 6b \beta'' l' + 4b \beta''' l + 4b \beta' l^3,\\
		B_5(u_0) =& - \frac{20b^2 \beta'^2 l}{\lambda}.
	\end{aligned}$$
	Moreover, $\gamma_1''(u_0) = \bm{0}, ~ \det(\gamma_1'''(u_0), \gamma_1^{(4)}(u_0)) = 0$ and $\det(\gamma_1'''(u_0), \gamma_1^{(5)}(u_0)) = \dfrac{40b^3 \beta'^3 l^2}{\lambda}.$
	So $\gamma_1$ has
	\begin{itemize}
		\item no $(2,3)$-cusp;
		\item no $(2,5)$-cusp;
		\item no $(3,4)$-cusp;
		\item a $(3,5)$-cusp at $u = u_0$ if and only if $\beta'(u_0)l(u_0) \neq 0.$
	\end{itemize}
	
	(2) When $\beta(u_0) = 0, ~ a(u_0) \neq 0,$ we have
	$$\begin{aligned}
		A_1(u_0) =& 0,\\
		B_1(u_0) =& 0,\\
		A_2(u_0) =& a \beta',\\
		B_2(u_0) =& 0,\\
		A_3(u_0) =& a \beta'' + 2b \beta' l,\\
		B_3(u_0) =& 0,\\
		A_4(u_0) =& a \beta''' + 3b \beta' l' + 3b \beta'' l + 3a \beta' l^2,\\
		B_4(u_0) =& - \frac{6a b \beta'^2}{\lambda},\\
		A_5(u_0) =& a \beta^{(4)} + 4b\beta' l'' + 6b \beta'' l' + 4b \beta''' l + 6a \beta'' l^2 + 4b \beta' l^3 + 12a \beta' l l',\\
		B_5(u_0) =& - \frac{20a^2 \beta'^2 l}{\lambda} - \frac{20b^2 \beta'^2 l}{\lambda} - \frac{20a b \beta' \beta''}{\lambda}.
	\end{aligned}$$
	Moreover, 
	$$\begin{aligned}
		&\det(\gamma_1''(u_0), \gamma_1'''(u_0)) = 0,\\
		&3\det(\gamma_1''(u_0), \gamma_1^{(5)}(u_0)) - 10\det(\gamma_1'''(u_0), \gamma_1^{(4)}(u_0)) = \frac{60\delta a \beta'^3 l}{\lambda},\\
		&\det(\gamma_1'''(u_0), \gamma_1^{(4)}(u_0)) = \frac{6a b \beta'^2}{\lambda}(a \beta'' + 2b \beta' l),\\
		&\det(\gamma_1'''(u_0), \gamma_1^{(5)}(u_0)) = \frac{20\beta'}{\lambda}(a^3 \beta' \beta'' l + 3a b^2 \beta' \beta'' l + a^2b \beta''^2 + 2a^2 b \beta'^2 l^2 + 2b^3 \beta'^2 l^2).
	\end{aligned}$$
	So $\gamma_1$ has
	\begin{itemize}
		\item no $(2,3)$-cusp;
		\item a $(2,5)$-cusp at $u_0$ if and only if $\beta'(u_0)l(u_0) \neq 0$;
		\item no $(3,4)$-cusp;
		\item no $(3,5)$-cusp.
	\end{itemize}
	
	(3) When $\beta(u_0) \neq 0, ~ a(u_0) = 0,$ we have
	$$\begin{aligned}
		A_1(u_0) =& 0,\\
		B_1(u_0) =& 0,\\
		A_2(u_0) =& b \beta l ,\\
		B_2(u_0) =& 0,\\
		A_3(u_0) =& 2b \beta' l + b \beta l',\\
		B_3(u_0) =& - \frac{3b^2 \beta^2 l}{\lambda},\\
		A_4(u_0) =& 3b \beta' l' + 3b \beta'' l + b \beta l'' + b \beta l^3 - \frac{6b^3 \beta^3 l}{\lambda^2},\\
		B_4(u_0) =& - \frac{4b^2 \beta^2 l'}{\lambda} - \frac{14b^2 \beta' \beta l}{\lambda},\\
		A_5(u_0) =& 4b\beta' l'' + 6b \beta'' l' + 4b \beta''' l + b \beta l''' + 4b \beta' l^3 + 6b \beta l^2 l' - \frac{10b^3 \beta^3 l'}{\lambda^2} - \frac{50b^3 \beta' \beta^2 l} {\lambda^2},\\
		B_5(u_0) =& - \frac{20b^2 \beta'^2 l}{\lambda} - \frac{5b^2 \beta^2 l''}{\lambda} + \frac{10b^4 \beta^4 l}{\lambda^3} - \frac{15b^2 \beta^2 l^3}{\lambda} - \frac{25b^2 \beta' \beta l'}{\lambda} - \frac{25b^2 \beta'' \beta l}{\lambda}.
	\end{aligned}$$
	Moreover, $\det(\gamma_1''(u_0), \gamma_1'''(u_0)) = \dfrac{3b^3 \beta^3 l^2}{\lambda}.$
	When $\gamma_1''(u_0) = \bm{0},$
	$$\begin{aligned}
		&\det(\gamma_1'''(u_0), \gamma_1^{(4)}(u_0)) = \frac{4b^3 \beta^3 l'^2}{\lambda},\\
		&\det(\gamma_1'''(u_0), \gamma_1^{(5)}(u_0)) = \frac{5b^3 \beta^2 l'}{\lambda}(\beta l'' + 5\beta' l').
	\end{aligned}$$
So $\gamma_1$ has
	\begin{itemize}
		\item a $(2,3)$-cusp at $u_0$ if and only if $l(u_0) \neq 0$;
		\item no $(2,5)$-cusp;
		\item a $(3,4)$-cusp at $u_0$ if and only if $l(u_0) = 0$ and $l'(u_0) \neq 0$;
		\item no $(3,5)$-cusp.
	\end{itemize}
\end{proof}

\subsection{Singularities of 2-type helicoidal surfaces}

Define the following two maps 
$\varphi_2 : I \times \mathbb{R} \rightarrow I \times \mathbb{R}, ~ (u,v) \mapsto (\tilde u, \tilde v)$ and $\psi_2 : \mathbb{R}_1^3 \rightarrow \mathbb{R}_1^3, ~ (x,y,z) \mapsto (\tilde x, \tilde y, \tilde z)$ by 
$$\varphi_2(u,v) = (u, x_2(u) + \lambda v)$$
and
$$\psi_2(x,y,z) = \left(x \cosh\frac{z}{\lambda} - y \sinh\frac{z}{\lambda}, x \sinh\frac{z}{\lambda} - y \cosh\frac{z}{\lambda}, z \right),$$
respectively. Their Jacobian matrices are 
$$J_{\varphi_2}(u,v) =
\begin{pmatrix}
	1 & 0 \\
	l(u)a(u) & \lambda
\end{pmatrix}$$
and
$$J_{\psi_2}(x,y,z) =
\begin{pmatrix}
	\cosh\dfrac{z}{\lambda} & -\sinh\dfrac{z}{\lambda} & \dfrac{\partial \tilde x}{\partial z}
	\vspace{1.5ex}\\
	\sinh\dfrac{z}{\lambda} & -\cosh\dfrac{z}{\lambda} & \dfrac{\partial \tilde y}{\partial z}
	\vspace{1.5ex}\\
	0 & 0 & 1
\end{pmatrix}.$$
So $\varphi_2$ and $\psi_2$ are both invertible, where the inverse map of $\varphi_2$ is $\varphi_2^{-1}(\tilde u, \tilde v) = \left(\tilde u, \dfrac{\tilde v - x_2(\tilde u)}{\lambda}\right).$

For the 2-type helicoidal surface $\bm{r}_2(u,v) = (x_1(u)\cosh v, x_1(u)\sinh v, x_2(u) + \lambda v),$ we define a surface
$$\begin{aligned}
	&\psi_2 \circ \bm{r}_2 \circ \varphi_2^{-1}(\tilde u, \tilde v)\\
	=& \psi_2 \left(x_1(\tilde u) \cosh \dfrac{\tilde v - x_2(\tilde u)}{\lambda}, x_1(\tilde u) \sinh \dfrac{\tilde v - x_2(\tilde u)}{\lambda}, \tilde v\right)\\
	=& \left(x_1(\tilde u) \cosh \dfrac{x_2(\tilde u)}{\lambda}, x_1(\tilde u) \sinh \dfrac{x_2(\tilde u)}{\lambda}, \tilde v\right)
\end{aligned}$$
and a curve
$$\gamma_2(\tilde u) = \left(x_1(\tilde u) \cosh \dfrac{x_2(\tilde u)}{\lambda}, x_1(\tilde u) \sinh \dfrac{x_2(\tilde u)}{\lambda}\right).$$
Then the 2-type helicoidal surface $\bm{r}_2$ is an $(i,j)$-cuspidal edge at $(u_0, v_0)$ if and only if the surface $\psi_2 \circ \bm{r}_2 \circ \varphi_2^{-1}$ is an $(i,j)$-cuspidal edge at $(u_0, x_2(u_0) + \lambda v_0).$ This is equivalent to the curve $\gamma_2$ being an $(i,j)$-cusp at $\tilde u = u_0.$ Note that $\gamma$ and $\gamma_2$ are diffeomorphic except at $x_1(u_0) = 0.$ We only consider $\gamma_2$ at $x_1(u_0) = 0.$ Since the diffeomorphism $\varphi_2$ maps $u$ to $\tilde u = u,$ we write $\gamma_2(\tilde u)$ as $\gamma_2(u).$ Differentiating $\gamma_2(u)$ with respect to $u,$ we obtain
$$\gamma_2^{(i)}(u) = \left(C_i(u)\cosh\frac{x_2(u)}{\lambda} + D_i(u)\sinh\frac{x_2(u)}{\lambda}, D_i(u)\cosh\frac{x_2(u)}{\lambda} + C_i(u)\sinh\frac{x_2(u)}{\lambda}\right)$$
and
$$\begin{aligned}
	&\det(\gamma_2^{(i)}(u), \gamma_2^{(j)}(u))\\
	=&\det
	\begin{pmatrix}
		C_i(u)\cosh\dfrac{x_2(u)}{\lambda} + D_i(u)\sinh\dfrac{x_2(u)}{\lambda} & D_i(u)\cosh\dfrac{x_2(u)}{\lambda} + C_i(u)\sinh\dfrac{x_2(u)}{\lambda}
		\vspace{1.5ex}\\
		C_j(u)\cosh\dfrac{x_2(u)}{\lambda} + D_j(u)\sinh\dfrac{x_2(u)}{\lambda} & D_j(u)\cosh\dfrac{x_2(u)}{\lambda} + C_j(u)\sinh\dfrac{x_2(u)}{\lambda}
	\end{pmatrix}\\
	=&\det
	\begin{pmatrix}
		C_i(u) & D_i(u) \\
		C_j(u) & D_j(u)
	\end{pmatrix},
\end{aligned}$$
where $i = 1,2,3,4,5$ and
$$\begin{aligned}
	C_1(u) =& b \beta,\\
	D_1(u) =& \frac{a \beta x_1}{\lambda},\\
	C_2(u) =& b \beta' + a \beta l + \frac{a^2 \beta^2 x_1}{\lambda^2},\\
	D_2(u) =& \frac{2 a b \beta^2}{\lambda} + \frac{a \beta' x_1}{\lambda} + \frac{b \beta l x_1}{\lambda},\\
	C_3(u) =& b \beta'' + b \beta l^2 + 2 a \beta' l + a \beta l' + \frac{3 a^2 b \beta^3}{\lambda^2} + \frac{3 a^2 \beta' \beta x_1}{\lambda^2} + \frac{3 a b \beta^2 l x_1}{\lambda^2},\\
	D_3(u) =& \frac{3 a^2 \beta^2 l}{\lambda} + \frac{3 b^2 \beta^2 l}{\lambda} + \frac{a^3 \beta^3 x_1}{\lambda^3} + \frac{a \beta'' x_1}{\lambda} + \frac{a \beta l^2 x_1}{\lambda} + \frac{6 a b \beta' \beta}{\lambda} + \frac{2 b \beta' l x_1}{\lambda} + \frac{b \beta l' x_1}{\lambda},\\
	C_4(u) =& b \beta''' + a \beta l^3 + 3 b \beta' l^2 + 3 a \beta' l' + 3 a \beta'' l + a \beta l'' + \frac{6 a^3 \beta^3 l}{\lambda^2} + \frac{3 a^2 \beta'^2 x_1}{\lambda^2} + \frac{a^4 \beta^4 x_1}{\lambda^4} + 3 b \beta l l'\\
	&+ \frac{18 a^2 b \beta' \beta^2}{\lambda^2} + \frac{12 a b^2 \beta^3 l}{\lambda^2} + \frac{4 a^2 \beta^2 l^2 x_1}{\lambda^2} + \frac{3 b^2 \beta^2 l^2 x_1}{\lambda^2} + \frac{4 a^2 \beta'' \beta x_1}{\lambda^2} + \frac{4 a b \beta^2 l' x_1}{\lambda^2}\\
	&+ \frac{14 a b \beta' \beta l x_1}{\lambda^2},\\
	D_4(u) =& \frac{6 a b \beta'^2}{\lambda} + \frac{4 a^3 b \beta^4}{\lambda^3} + \frac{4 a^2 \beta^2 l'}{\lambda} + \frac{4 b^2 \beta^2 l'}{\lambda} + \frac{a \beta''' x_1}{\lambda} + \frac{3 a \beta' l^2 x_1}{\lambda} + \frac{b \beta l^3 x_1}{\lambda} + \frac{14 a b \beta^2 l^2}{\lambda}\\
	&+ \frac{6 a^3 \beta' \beta^2 x_1}{\lambda^3} + \frac{8 a b \beta'' \beta}{\lambda} + \frac{3 b \beta' l' x_1}{\lambda} + \frac{3 b \beta'' l x_1}{\lambda} + \frac{b \beta l'' x_1}{\lambda} + \frac{14 a^2 \beta' \beta l}{\lambda} + \frac{14 b^2 \beta' \beta l}{\lambda}\\
	&+ \frac{6 a^2 b \beta^3 l x_1}{\lambda^3} + \frac{3 a \beta l l' x_1}{\lambda},\\
	C_5(u) =& b \beta^{(4)} + 4 a \beta' l^3 + 6 b \beta'' l^2 + 3 b \beta l'^2 + b \beta l^4 + 4 a \beta' l'' + 6 a \beta'' l' + 4 a \beta''' l + a \beta l''' + 6 a \beta l^2 l'\\
	&+ \frac{5 a^4 b \beta^5}{\lambda^4} + \frac{10 a^3 \beta^3 l'}{\lambda^2} + 12 b \beta' l l' + 4 b \beta l l'' + \frac{15 b^3 \beta^3 l^2}{\lambda^2} + \frac{45 a^2 b \beta'^2 \beta}{\lambda^2} + \frac{30 a^2 b \beta'' \beta^2}{\lambda^2}\\
	&+ \frac{20 a b^2 \beta^3 l'}{\lambda^2} + \frac{50 a^3 \beta' \beta^2 l}{\lambda^2} + \frac{10 a^4 \beta' \beta^3 x_1}{\lambda^4} + \frac{60 a^2 b \beta^3 l^2}{\lambda^2} + \frac{10 a^2 \beta' \beta'' x_1}{\lambda^2} + \frac{5 a^2 \beta''' \beta x_1}{\lambda^2}\\
	&+ \frac{100 a b^2 \beta' \beta^2 l}{\lambda^2} + \frac{15 a b \beta^2 l^3 x_1}{\lambda^2} + \frac{10 a^3 b \beta^4 l x_1}{\lambda^4} + \frac{25 a^2 \beta' \beta l^2 x_1}{\lambda^2} + \frac{20 b^2 \beta' \beta l^2 x_1}{\lambda^2} + \frac{15 a^2 \beta^2 l l' x_1}{\lambda^2}\\
	&+ \frac{10 b^2 \beta^2 l l' x_1}{\lambda^2} + \frac{20 a b \beta'^2 l x_1}{\lambda^2} + \frac{5 a b \beta^2 l'' x_1}{\lambda^2} + \frac{25 a b \beta' \beta l' x_1}{\lambda^2} + \frac{25 a b \beta'' \beta l x_1}{\lambda^2},\\
	D_5(u) =& \frac{20 a^2 \beta'^2 l}{\lambda} + \frac{5 a^2 \beta^2 l''}{\lambda} + \frac{10 a^4 \beta^4 l}{\lambda^3} + \frac{20 b^2 \beta'^2 l}{\lambda} + \frac{5 b^2 \beta^2 l''}{\lambda} + \frac{a^5 \beta^5 x_1}{\lambda^5} + \frac{a \beta^{(4)} x_1}{\lambda} + \frac{15 a^2 \beta^2 l^3}{\lambda}\\
	&+ \frac{15 b^2 \beta^2 l^3}{\lambda} + \frac{6 a \beta'' l^2 x_1}{\lambda} + \frac{3 a \beta l'^2 x_1}{\lambda} + \frac{a \beta l^4 x_1}{\lambda} + \frac{4 b \beta' l^3 x_1}{\lambda} + \frac{40 a^3 b \beta' \beta^3}{\lambda^3} + \frac{15 a^3 \beta'^2 \beta x_1}{\lambda^3}\\
	&+ \frac{10 a^3 \beta'' \beta^2 x_1}{\lambda^3} + \frac{20 a b \beta' \beta''}{\lambda} + \frac{10 a b \beta''' \beta}{\lambda} + \frac{4 b \beta' l'' x_1}{\lambda} + \frac{6 b \beta'' l' x_1}{\lambda} + \frac{4 b \beta''' l x_1}{\lambda} + \frac{b \beta l''' x_1}{\lambda}\\
	&+ \frac{30 a^2 b^2 \beta^4 l}{\lambda^3} + \frac{10 a^3 \beta^3 l^2 x_1}{\lambda^3} + \frac{25 a^2 \beta' \beta l'}{\lambda} + \frac{25 a^2 \beta'' \beta l}{\lambda} + \frac{25 b^2 \beta' \beta l'}{\lambda} + \frac{25 b^2 \beta'' \beta l}{\lambda}\\
	&+ \frac{10 a^2 b \beta^3 l' x_1}{\lambda^3} + \frac{12 a \beta' l l' x_1}{\lambda} + \frac{4 a \beta l l'' x_1}{\lambda} + \frac{15 a b^2 \beta^3 l^2 x_1}{\lambda^3} + \frac{90 a b \beta' \beta l^2}{\lambda} + \frac{50 a b \beta^2 l l'}{\lambda}\\
	&+ \frac{6 b \beta l^2 l' x_1}{\lambda} + \frac{50 a^2 b \beta' \beta^2 l x_1}{\lambda^3}.
\end{aligned}$$

Similar to the Theorem \ref{th}, we state following conclusions without proof.
\begin{theorem}
	Let $\bm{r}_2$ be the 2-type helicoidal surface and $(u_0, v_0)$ be a singular point of $\bm{r}_2.$ Assume that $x_1(u_0) = 0.$ We obtain the following results.
	\begin{enumerate}
		\item [{\rm (1)}] If $\beta(u_0) = 0, ~ b(u_0) = 0,$ then for any $v \in \mathbb{R},$ the surface $\bm{r}_2$ is a $(3,5)$-cuspidal edge at $(u_0, v)$ if and only if $\beta'(u_0)l(u_0) \neq 0.$ In this case, $\bm{r}_2$ does not have $(2,3)$-cuspidal edges, $(2,5)$-cuspidal edges or $(3,4)$-cuspidal edges.
		\item [{\rm (2)}] If $\beta(u_0) = 0, ~ b(u_0) \neq 0,$ then for any $v \in \mathbb{R},$ the surface $\bm{r}_2$ is a $(2,5)$-cuspidal edge at $(u_0, v)$ if and only if $\beta'(u_0)l(u_0) \neq 0.$ In this case, $\bm{r}_2$ does not have $(2,3)$-cuspidal edges, $(3,4)$-cuspidal edges or $(3,5)$-cuspidal edges.
		\item [{\rm (3)}] If $\beta(u_0) \neq 0, ~ b(u_0) = 0,$ then for any $v \in \mathbb{R},$ the surface $\bm{r}_2$ is a $(2,3)$-cuspidal edge at $(u_0, v)$ if and only if $l(u_0) \neq 0.$ $\bm{r}_2$ is a $(3,4)$-cuspidal edge at $(u_0, v)$ if and only if $l(u_0) = 0$ and $l'(u_0) \neq 0.$ In this case, $\bm{r}_2$ does not have $(2,5)$-cuspidal edges or $(3,5)$-cuspidal edges.
	\end{enumerate}
\end{theorem}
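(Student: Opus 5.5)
The plan mirrors the proof of Theorem \ref{th}, now for the curve $\gamma_2$. By the reduction via $\varphi_2$ and $\psi_2$ preceding the statement, $\bm{r}_2$ is an $(i,j)$-cuspidal edge at $(u_0,v)$ exactly when $\gamma_2$ is an $(i,j)$-cusp at $u_0$. This holds for every $v$, since $\varphi_2$ only shifts the second coordinate. It therefore suffices to apply the criteria of Proposition (the $(i,j)$-cusp criteria from \cite{IRP}) to $\gamma_2$ at $u_0$. By the displayed identity, $\det(\gamma_2^{(i)},\gamma_2^{(j)})=\det\begin{pmatrix}C_i&D_i\\ C_j&D_j\end{pmatrix}$, and moreover $\gamma_2^{(i)}(u_0)=\bm 0$ if and only if $C_i(u_0)=D_i(u_0)=0$. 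So everything reduces to evaluating $C_i(u_0),D_i(u_0)$ for $i\le 5$.

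First substitute $x_1(u_0)=0$ into the formulas for $C_i,D_i$; every term carrying a factor $x_1$ drops out. Since $(u_0,v_0)$ is singular and $x_1(u_0)=0$, we are in one of the three cases $\beta b=0$, and I treat them separately.

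\textbf{Case (1): $\beta(u_0)=b(u_0)=0$.} Here $a^2=\delta$, so $a(u_0)\neq 0$. Then $C_1=D_1=C_2=D_2=0$, $C_3=2a\beta' l$, $D_3=0$, $D_4=0$, and $D_5=20a^2\beta'^2 l/\lambda$. Hence $\gamma_2''(u_0)=\bm 0$ and $\det(\gamma_2''',\gamma_2^{(4)})=0$, while
\[
\det(\gamma_2''',\gamma_2^{(5)})=\frac{40a^3\beta'^3l^2}{\lambda}.
\]
The criterion gives a $(3,5)$-cusp if and only if $\beta' l\neq0$, and excludes all the other types.

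\textbf{Case (2): $\beta(u_0)=0$, $b(u_0)\neq 0$.} Here $C_2=b\beta'$, $D_2=0$, $D_3=0$, so $\det(\gamma_2'',\gamma_2''')=0$. Also $\gamma_2''(u_0)\ne\bm 0$ exactly when $\beta'\ne0$; if $\beta'=0$, I must check directly that none of the types occurs. With $\gamma_2'''=k\gamma_2''$ and $k=C_3/C_2$, I compute
\[
\det\bigl(\gamma_2'',3\gamma_2^{(5)}-10k\gamma_2^{(4)}\bigr)=3C_2D_5-10C_3D_4.
\]
Inserting $D_4=6ab\beta'^2/\lambda$ and $D_5=20(a^2+b^2)\beta'^2l/\lambda+20ab\beta'\beta''/\lambda$, the $\beta''$ terms should cancel, leaving a nonzero multiple of $\delta b\beta'^3 l/\lambda$. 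That gives a $(2,5)$-cusp if and only if $\beta' l\ne0$. The $(3,4)$ and $(3,5)$ types are excluded because they require $\gamma_2''=\bm0$, i.e.\ $\beta'=0$, and then every jet up to order $5$ has $D_i=0$.

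\textbf{Case (3): $\beta(u_0)\ne0$, $b(u_0)=0$.} Then $C_2=a\beta l$ and $D_2=0$, while $D_3=3a^2\beta^2 l/\lambda$, so
\[
\det(\gamma_2'',\gamma_2''')=\frac{3a^3\beta^3l^2}{\lambda}.
\]
This gives a $(2,3)$-cusp if and only if $l\ne0$. If $l=0$, then $\gamma_2''=\bm0$, and $\det(\gamma_2''',\gamma_2^{(4)})$ reduces to $4a^3\beta^3l'^2/\lambda$, giving a $(3,4)$-cusp if and only if $l'\ne0$. The $(2,5)$ type is excluded because $l\neq 0$ already gives a $(2,3)$-cusp, and $l=0$ forces $\gamma_2''=\bm 0$. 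For $(3,5)$, $\det(\gamma_2''',\gamma_2^{(5)})$ is a multiple of $l'$, so it vanishes whenever $\det(\gamma_2''',\gamma_2^{(4)})=0$.

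The main obstacle is the bookkeeping in case (2): evaluating $3C_2D_5-10C_3D_4$ correctly, confirming that the $\beta''$ contributions cancel, and using $a^2-b^2=\delta$ to extract the factor $\delta$. I would organize that step by writing $C_3=b\beta''+2a\beta'l$ and expanding symbolically. The hyperbolic analogue of the rotation in Theorem \ref{th} introduces no sign difference, because the transformation has determinant $\cosh^2-\sinh^2=1$, so the determinant identity above holds with no sign change.
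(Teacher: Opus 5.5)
Your proposal is correct and follows the paper's route. The paper states this theorem without proof as the analogue of Theorem \ref{th}: reduce via $\varphi_2,\psi_2$ to the cusp criteria for $\gamma_2$, then evaluate $C_i(u_0),D_i(u_0)$ in the three cases. Your values check out (in case (2) the $\beta''$ terms cancel to give $3C_2D_5-10C_3D_4=-60\delta b\beta'^3 l/\lambda$), and your separate treatment of $\beta'(u_0)=0$ in case (2) makes explicit a subcase the paper leaves implicit.
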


\section{Examples}
\begin{example}
	Let $(\gamma, \bm\nu) : \mathbb{R} \rightarrow \mathbb{R}_1^2 \times \Delta$ be 
	$$\gamma(u) = (u \cosh u - \sinh u, u \sinh u - \cosh u + 1), ~ \bm\nu(u) = (\cosh u, \sinh u).$$
	$(\gamma, \bm\nu)$ is a spacelike Legendre curve with the curvature $l(u) = 1$ and $\beta(u) = u.$ Take $\lambda = 1,$ then the 1-type helicoidal surface is 
	$$\bm{r}_1(u,v) = (u \cosh u - \sinh u + v, (u \sinh u - \cosh u + 1)\sin v, (u \sinh u - \cosh u + 1)\cos v).$$
	Moreover, 
	$$\begin{aligned}
		\frac{\partial \bm{r}_1}{\partial u}(u,v) &= (u \sinh u, u \cosh u \sin v, u \cosh u \cos v),\\
		\frac{\partial \bm{r}_1}{\partial v}(u,v) &= (1, (u \sinh u - \cosh u + 1)\cos v, -(u \sinh u - \cosh u + 1)\sin v),\\
		\frac{\partial \bm{r}_1}{\partial u}(u,v) \wedge \frac{\partial \bm{r}_1}{\partial v}(u,v) &= u\big((u \sinh u - \cosh u + 1)\cosh u,\\
		&~~~~~~(u \sinh u - \cosh u + 1)\sinh u \sin v + \cosh u \cos v,\\
		&~~~~~~(u \sinh u - \cosh u + 1)\sinh u \cos v - \cosh u \sin v \big).
	\end{aligned}$$
	The singularities of $\bm{r}_1$ are $(0, v)$ for any $v \in \mathbb{R}.$ When $u = 0,$ we have
	$$x_2(0) = 0, ~ \beta(0) = 0, ~ a(0) = 1, ~ \beta'(0)l(0) = 1.$$
	So $\bm{r}_1$ is a $(2,5)$-cuspidal edge at $(0,v).$ $\bm{r}_1$ and its singular locus are shown in Figure \ref{ex1}.
	
	\begin{figure}[H]
		\centering
		\includegraphics[width = 9cm, height = 10cm]{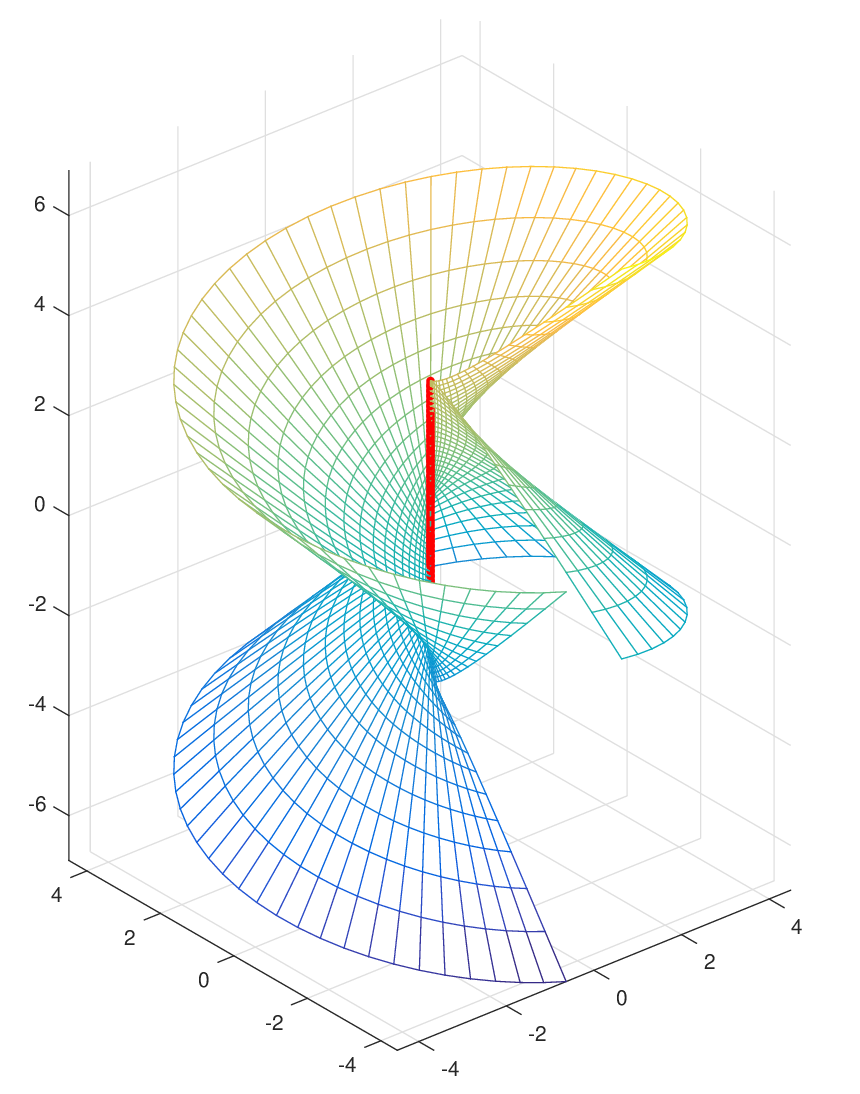}
		\caption{The 1-type helicoidal surface (mesh) and its singular locus (red curve).}
		\label{ex1}
	\end{figure}
\end{example}

\begin{example}
	Let $(\gamma, \bm\nu) : (-1,1) \rightarrow \mathbb{R}_1^2 \times \Delta$ be
	$$\gamma(u) = \left( \frac{u^2}{2}, \frac{u^3}{3} \right), ~ \bm\nu(u) = \frac{(u,1)}{\sqrt{1-u^2}}.$$
	$(\gamma, \bm\nu)$ is a timelike Legendre curve with the curvature $l(u) = \dfrac{1}{1 - u^2}$ and $\beta(u) = u\sqrt{1 - u^2}.$ If we take $\lambda = 1,$ then the 2-type helicoidal surface is
	$$\bm{r}_2(u,v) = \left(\frac{u^2}{2}\cosh v, \frac{u^2}{2}\sinh v, \frac{u^3}{3} + v\right).$$
	Moreover,
	$$\begin{aligned}
		\frac{\partial \bm{r}_2}{\partial u}(u,v) &= (u \cosh v, u \sinh v, u^2),\\
		\frac{\partial \bm{r}_2}{\partial v}(u,v) &= \left(\frac{u^2}{2}\sinh v, \frac{u^2}{2}\cosh v, 1\right),\\
		\frac{\partial \bm{r}_2}{\partial u}(u,v) \wedge \frac{\partial \bm{r}_2}{\partial v}(u,v) &= u\left(\frac{u^3}{2} \cosh v - \sinh v, \frac{u^3}{2} \sinh v - \cosh v, \frac{u^2}{2}\right).
	\end{aligned}$$
	The singularities of $\bm{r}_2$ are $(0, v)$ for any $v \in \mathbb{R}.$ When $u = 0,$ we have
	$$x_1(0) = 0, ~ \beta(0) = 0, ~ b(0) = 1, ~ \beta'(0)l(0) = 1.$$
	So $\bm{r}_2$ is a $(2,5)$-cuspidal edge at $(0,v).$ $\bm{r}_2$ and its singular locus are shown in Figure \ref{ex2}.
	
	\begin{figure}[H]
		\centering
		\includegraphics[width = 12cm, height = 7cm]{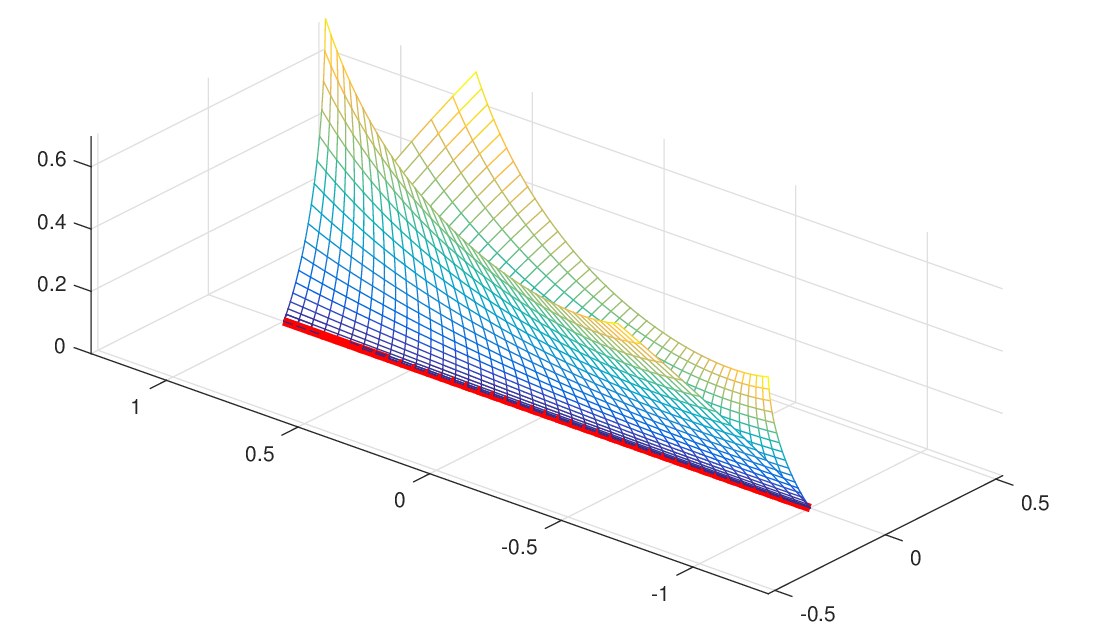}
		\caption{The 2-type helicoidal surface (mesh) and its singular locus (red curve).}
		\label{ex2}
	\end{figure}
\end{example}

\bigskip

\noindent{\bf Acknowledgements.}
The first author is funded by the Science Research Project of Hebei Education Department (Grant No. QN2026104).
The second author is partially supported by the Yili Normal University Returning Doctoral Research Start-up Project (Grant No. 2025GFX001) and National Nature Science Foundation of China (Grant No. 12471021).


\end{document}